\documentclass[12pt, a4paper]{article}

\usepackage{a4,amsmath,amssymb, amsthm, latexsym, color, graphicx,url}
\usepackage{tikz}
\usepackage{authblk}
\usepackage{fullpage}
\usepackage{enumitem}
\usepackage{xcolor}
\newtheorem{theorem}{Theorem}[section]
\newtheorem{proposition}[theorem]{Proposition}
\newtheorem{lemma}[theorem]{Lemma}
\newtheorem{corollary}[theorem]{Corollary}

\theoremstyle{definition}
\newtheorem{example}[theorem]{Example}
\newtheorem{definition}[theorem]{Definition}
\newtheorem{remark}[theorem]{Remark}

\newcommand{\E}{\overrightarrow{E}}

\title{Transfer of difference structures: a new semidirect product framework}

\author[1] {Sophie Huczynska}
\author[2] {Struan McCartney}
\author[3] {Carys Williams}
\affil[1,2,3]{School of Mathematics and Statistics, University of St Andrews, St Andrews, KY16 9SS, Scotland, UK; email:(SH) sh70@st-andrews.ac.uk, (SM) sm444@st-andrews.ac.uk}
\date{\small Keywords: difference sets, external difference families, near-factorizations, semidirect products}
\begin{document}
\maketitle

\begin{abstract}
Difference sets, external difference families and near-factorizations of groups are much-studied structures satisfying certain uniformity conditions on the differences or sums within or between elements of sets in groups.  Research has centred on abelian groups, though there are classic results (such as Dillon's Dihedral Trick for difference sets and the work of P\^{e}cher on near-factorizations) connecting abelian and non-abelian structures, which have recently regained attention.

We present a new explicit framework enabling transfer of difference structures between groups using a semidirect product approach, establishing new tools and constructions, encompassing various previous results and addressing an open problem of Swartz et al.  Motivated by the classic dihedral results, we focus on semidirect products $G \rtimes \mathbb{Z}_2$.  Transfer from abelian to non-abelian groups, and between distinct non-abelian groups, are both possible, and our framework can handle $\lambda$-fold near-factorizations, difference sets and external difference structures with arbitrarily many sets.  We obtain new near-factorizations and difference structures in a range of groups, and can transfer various examples not transferrable by previous approaches. We showcase our approach by establishing a new infinite family of three-set abelian circular external difference families, then producing from this the first infinite family of non-abelian circular external difference families.  
\end{abstract}

\section{Introduction}
Difference sets, internal and external difference families and their variations, are combinatorial structures which have been much-studied.  Difference sets (DSs) and (internal) difference families (DFs) have been studied since the early 20th century \cite{hand}.  External difference families (EDFs), strong external difference families (SEDFs) and circular external difference families (CEDFs) were introduced more recently, motivated by applications in cryptography \cite{HucJefMcC, PatSti, StiVei}; these were  unified and extended via the concept of digraph-defined EDFs \cite{HucJefMcC}.  Each such structure consists of one or more subsets of a group $G$, with the property that the multiset of differences arising within/between the elements of the subsets comprises a constant number $\lambda$ of each non-identity element of $G$. 

These structures have primarily been considered in abelian groups, with many constructions obtained via cyclotomy, group theory and finite geometry, and non-existence results via character theory.  There has been some work on difference sets and families in non-abelian groups, but it is observed in \cite{Swa} that (although the number of nonabelian DSs `` dwarf" the number of abelian DSs), relatively little work has done on developing techniques for the nonabelian setting.  In the external difference family context, an EDF construction arising from a partition of a non-abelian $p$-group is given  in \cite{HucPatWEDF} and a $3$-set dihedral CEDF found by computer search is given in \cite{HucJefMcC}), but the only known non-abelian infinite families are two-set SEDFs in certain dihedral groups (\cite{HucJefMcC, KrePatSti}).

Several of the known theoretical non-abelian results focus on connections between a difference structure in an abelian group and a corresponding difference structure in a non-abelian group. One classic example of this is given in the 1985 paper \cite{Dil}, where Dillon proves the existence of an abelian difference set in any group which has an index $2$ subgroup $G$, if the generalized dihedral extension $Dih(G)$ of $G$ has a difference set  (often called ``Dillon's Dihedral Trick"). Work on difference sets (and their partial and relative variants) by Davis and coauthors in papers such as \cite{Apple,Dav,Swa} explores how the nonabelian case may be approached using ideas from the abelian setting.  In \cite{Swa}, the authors investigate how a ``combinatorial transfer method" may be used to obtain nonabelian difference sets from abelian difference sets. Their work aims to unify various ad hoc approaches in which, starting from a given DS, a combinatorial object such as a design or Cayley graph is built and its automorphism group is exploited to yield new examples of DSs. They give group theoretic conditions for when a difference set in one group $G$ guarantees the existence of a DS in an other group with the same parameters, by considering certain automorphisms of $G$. In the process, they prove a partial converse of Dillon's Dihedral Trick (though all other results of \cite{Swa} are specifically in groups of prime power order).  We note that the proof of Dillon's original trick uses only basic properties of the difference multisets involved, without explicit reference to semidirect product structure.

A factorization of a group $G$ consists of two subsets $A$ and $B$ of $G$ such that $AB=G$. A near-factorization consists of two subsets of a $G$ such that $AB=G \setminus \{g\}$ for some $g \in G$; we may assume that the omitted element $g$ is the identity. There is a correspondence between a near-factorization of a group $G$ and a certain type of two-set external difference family (a generalized strong EDF (GSEDF)) in $G$.  Near-factorizations occur in both abelian and non-abelian groups.  A construction for a near-factorization of the dihedral group $D_{2n}$ is given in \cite{Caen}. In \cite{Pec} P\^{e}cher (motivated by an application in graph theory) gives a method to transfer a near-factorization of $G \times \mathbb{Z}_2$ (where $G$ is an abelian group) to a near-factorization of the generalized dihedral extension of $G$, and this is further discussed in the $D_{2n}$ context by Kreher et al in \cite{KrePatSti} (Kreher et al are the first to highlight the connection with GSEDFs). Near-factorizations were generalized in \cite{KreLiSti} to $\lambda$-fold near-factorizations (where $AB=\lambda(G \setminus \{e\})$), which correspond to two-set GSEDFs.

In this paper, we present a semidirect product framework for transferring difference structures and near-factorizations between different groups.  Our approach is explicit, focusing on the automorphisms involved.  It applies not only to difference sets and $\lambda$-fold near-factorizations, but also to the general setting of digraph-defined external difference families with any number of sets.   Motivated by the results of \cite{KrePatSti,Pec}, we focus on semidirect products of the form $G \rtimes \mathbb{Z}_2$ ($G$ abelian).  These results of P\^{e}cher and Kreher et al feature as a special case of our approach. This paper has strong links with the work of Swartz et al on difference sets, particularly Theorem 1.1 of \cite{Swa}; our approach is more concrete and less group-theoretic, and while we consider a more restricted range of groups than Theorem 1.1, our framework handles a wider range of difference structures (addressing one of their open questions).  We showcase our approach via one particular type of external difference structure: we first establish a new infinite family of abelian three-set CEDFs in $\mathbb{Z}_{(3l^2+1)/2} \times \mathbb{Z}_2$ ($l$ odd) and then apply our transfer method to this, to obtain the first infinite family of non-abelian CEDFs.  Moreover, various examples of near-factorizations and difference structures from the literature, not amenable to transfer via previous methods \cite{KreLiSti, KrePatSti, Pec}, can be transferred to other groups via our tools (in some cases, between different non-abelian groups).

\section{Background}
We begin by recalling some algebraic and combinatorial concepts which will be used in what follows.  

Throughout, $G$ will denote a group. In general, we will use multiplicative notation for arbitrary or non-abelian groups, and additive notation for abelian groups. For a group $G$, we denote by $AUT(G)$ the group of automorphisms of $G$.  For a ring $R$ we denote by $U(R)$ the group of units of $R$.

\subsection{Semidirect products}

For a subset $X \subseteq G \times \mathbb{Z}_2$, let $X_0=\{(x,0): (x,0) \in X\}$ and $X_1=\{(x,1): (x,1) \in X\}$, so that $X$ is the disjoint union of $X_0$ and $X_1$. 
\begin{definition}
Let $G$ and $H$ be groups, and let $\Gamma: H \rightarrow AUT(G)$ be a group homomorphism. For $h \in H$, denote by $\Gamma_h$ the automorphism associated to $h$. We define a new group, called the \textit{semidirect product of $G$ and $H$ with respect to $\Gamma$} (denoted by $G \rtimes_\Gamma H$) as follows:
\begin{itemize}
\item the underlying set is $\{(g,h): g \in G, h \in H\}$, the Cartesian product $G \times H$;
\item the multiplication is given by:
\[(g_1,h_1)*_{\Gamma}(g_2,h_2) = (g_1 \Gamma_{h_1}(g_2),h_1 h_2).\]
\end{itemize}
It can be shown this is a group with identity $(e_G,e_H)$, where the inverse of $(g,h)$ is $(\Gamma_{h^{-1}}(g^{-1}), h^{-1})$.
\end{definition}
We note that, given $G$ and $H$, distinct choices of $\Gamma$ do not necessarily lead to non-isomorphic semidirect products.  For more information on semidirect products, see for example the expository notes \cite{Con} or the textbook \cite{Isa}. 

\begin{example}
When $\Gamma$ is the trivial homomorphism, so $\Gamma_h$ is the identity on $H$ for all $h \in H$, then the semidirect product is in fact the direct product $G \times H$ of $G$ and $H$. 
\end{example}

We recall that for $G=\mathbb{Z}_m$ and $H=\mathbb{Z}_n$, if $m$ and $n$ are coprime then the group $\mathbb{Z}_{mn}$ is isomorphic to the direct product $\mathbb{Z}_m \times \mathbb{Z}_n$ (under the isomorphism $x \mapsto (x \mod m, x \mod n)$).

In this paper, we will focus on direct products of the following type:
\begin{definition}\label{def:GxZ_2}
Let $G$ be a group that has an automorphism $f$ of order $2$.  Consider the homomorphism $\Gamma$ from $\mathbb{Z}_2$ to $AUT(G)$ given by
\[
    \Gamma_k(g) = \begin{cases} 
           f^0(g), & k=0 \\
           f^1(g), & k=1\\
    \end{cases}
    \]
where $f^0(g)=g$ and $f^1(g)=f(g)$.

A semidirect product $G \rtimes_{\Gamma} \mathbb{Z}_2$ may be defined, with multiplication
\[ (g_1,h_1)*_{\Gamma}(g_2,h_2)=(g_1 f^{h_1}(g_2),h_1+h_2).\]
\end{definition}

(An analogous process gives a semidirect product $G \rtimes_{\Gamma} \mathbb{Z}_n$ for any $G$ with an automorphism $f$ such that $f^n \in AUT(G)$ is the identity.  $G$ may be abelian or non-abelian.)

\begin{lemma}\label{lem:nonabelian}
For a group $G$ and an element $f$ of order 2 in $AUT(G)$, the semidirect product $G \rtimes_{\Gamma} \mathbb{Z}_2$ as defined in Definition \ref{def:GxZ_2}  is non-abelian.
\end{lemma}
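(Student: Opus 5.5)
Since $f$ has order $2$ in $AUT(G)$, it is not the identity automorphism, so I will fix some $g \in G$ with $f(g) \neq g$. The plan is to exhibit one explicit pair of elements of $G \rtimes_{\Gamma} \mathbb{Z}_2$ that do not commute, using only the multiplication rule of Definition \ref{def:GxZ_2}.

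The natural pair is $(g,0)$ and $(e_G,1)$. I will compute both products. On the one hand,
\[
(e_G,1)*_{\Gamma}(g,0) = (e_G\, f^{1}(g),\, 1) = (f(g),1).
\]
On the other hand,
\[
(g,0)*_{\Gamma}(e_G,1) = (g\, f^{0}(e_G),\, 1) = (g,1).
\]
Comparing first coordinates, these are equal only if $f(g) = g$. This contradicts the choice of $g$, so the two elements do not commute and the group is non-abelian.

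There is no real obstacle here. The only point to state explicitly is that "order $2$" rules out $f$ being the identity, which is what guarantees that such a $g$ exists. Nothing beyond the definition is used, and the argument holds whether $G$ itself is abelian or not.
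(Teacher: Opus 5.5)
Your proof is correct and takes the same approach as the paper: both pick an element moved by $f$ (which exists because $f$ has order $2$, so $f \neq \mathrm{id}$) and exhibit a non-commuting pair with second coordinates $0$ and $1$. The paper uses the pair $(x,0),(x,1)$ and compares $(x^2,1)$ with $(xf(x),1)$; your choice of $(e_G,1)$ in place of $(x,1)$ gives $(g,1)$ versus $(f(g),1)$ directly, which avoids even the small left-cancellation step.
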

\begin{proof}
As $f$ has order 2 in $AUT(G)$, there exists $x \in G$ such that $f(x) \neq x$. Consider $(x,0),(x,1) \in G \rtimes_\Gamma \mathbb{Z}_2$. We have:
    \[(x,0) *_\Gamma(x,1)=(xf^0(x),0+1) = (x^2,1)\]
    \[(x,1) *_\Gamma(x,0)=(xf^1(x),1+0) = (xf(x),1)\]
 Here $xf(x) \neq x^2$, so \label{ex:GxZ_2} $(x,0) *_\Gamma(x,1) \neq (x,1) *_\Gamma(x,0)$, and $G \rtimes_{\Gamma} \mathbb{Z}_2$ is non abelian.
\end{proof}
We observe that the ``inversion" mapping behaves differently for different groups: for $\mathbb{Z}_n$ with $ n \geq 3$, the mapping $f\in AUT(\mathbb{Z}_n)$ given by $f(x) = x^{-1}$ is an order-2 automorphism. However, for $f \in AUT(K_4)$ given by $f(x)=x^{-1}$, $f$ is the identity automorphism.
\begin{remark}
Let $f \in AUT(G)$.
\begin{itemize}
\item If $f(x) = x^{-1}$, then $f$ has order $2$ precisely if the group $G$ has at least one element that is not self-inverse (i.e. at least one element of order greater than $2$).
\item If $G=H \times H$ and $f(x,y) = (y,x)$, then $f$ has order $2$ precisely if $G$ has at least one element $(x,y)$ where $y \neq x$.
\end{itemize}
\end{remark}

The next example, which is a special case of Definition \ref{def:GxZ_2}, is used in P\^{e}cher \cite{Pec}, and is referred to by Kreher, Paterson and Stinson in \cite{KrePatSti} as the ``P\^{e}cher transform" (as applied to the case $G=\mathbb{Z}_{n}$ ($n$ odd)).

\begin{example}\label{def:dihedral}
Let $G$ be an abelian group and $H_2=\mathbb{Z}_2$.  Consider the homomorphism $\Phi$ from $\mathbb{Z}_2$ to $AUT(G)$ given by
    \[
    \Phi_x = \begin{cases} 
           y \mapsto y, & z=0 \\
           y \mapsto -y, & z=1\\
    \end{cases}
    \]
The multiplication is:
\[ (g_1,h_1)*_{\Phi}(g_2,h_2)=(g_1+\Phi_{h_1}(g_2),h_1+h_2)\]
which can also be expressed as:
\[ (g_1,h_1)*_{\Phi}(g_2,h_2)=(g_1+(-1)^{h_1}g_2,h_1+h_2)\]
The resulting semidirect product $G \rtimes_\Phi \mathbb{Z}_2$ is known as the generalized dihedral group, and is written $Dih(G)$. 
\end{example}

This semidirect product simplifies to the direct product if all elements of $G$ have order at most $2$.  If $G$ has at least one element of order greater than $2$, then $G \rtimes_{\Phi} \mathbb{Z}_2$ is distinct from the direct product, and by Lemma \ref{lem:nonabelian} it is nonabelian.
The following special case of Example \ref{def:dihedral} is particularly important.
\begin{lemma}\label{lem:dihedraldef}
Let $n>2$. Let $D_{2n}=\langle r,s: r^n=s^2=e, srs=r^{-1} \rangle$ be the dihedral group of order $2n$. Then $\mathbb{Z}_n \rtimes_\Phi \mathbb{Z}_2$ is isomorphic to $D_{2n}$, under the isomorphism $(x,y) \mapsto r^x s^y$.
\end{lemma}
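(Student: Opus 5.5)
I will verify directly that the map $\psi:\mathbb{Z}_n\rtimes_\Phi\mathbb{Z}_2\to D_{2n}$, $\psi(x,y)=r^xs^y$, is a well-defined bijective homomorphism. It is well defined because $r$ has order $n$ and $s$ has order $2$: the exponents only matter modulo $n$ and modulo $2$ respectively, so representatives $x\in\{0,\dots,n-1\}$ and $y\in\{0,1\}$ can be used throughout.

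\emph{Bijectivity.} I will use the standard normal form for the dihedral group. Every element of $D_{2n}$ can be written as $r^xs^y$, since the relation $sr=r^{-1}s$ lets one move all occurrences of $s$ to the right. Moreover $|D_{2n}|=2n$, so the $2n$ words $r^xs^y$ with $0\le x<n$, $0\le y\le 1$ are distinct. Hence $\psi$ is a surjection between two sets of size $2n$, and therefore a bijection.

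\emph{Homomorphism.} By Example~\ref{def:dihedral}, $(x_1,y_1)*_\Phi(x_2,y_2)=(x_1+(-1)^{y_1}x_2,\,y_1+y_2)$. So I need to show
\[
r^{x_1}s^{y_1}r^{x_2}s^{y_2}=r^{x_1+(-1)^{y_1}x_2}s^{y_1+y_2}.
\]
This reduces to the identity $s^{y}r^{x}=r^{(-1)^{y}x}s^{y}$ for $y\in\{0,1\}$. The case $y=0$ is trivial. For $y=1$, the relation $srs=r^{-1}$ together with $s^2=e$ gives $sr^xs=(srs)^x=r^{-x}$, and hence $sr^x=r^{-x}s$. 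Finally, $s^{y_1+y_2}$ depends only on $y_1+y_2 \bmod 2$, because $s^2=e$; this matches addition in $\mathbb{Z}_2$.

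The main point requiring care is the bijectivity step, specifically the fact that the presentation defines a group of order exactly $2n$, so that the normal form $r^xs^y$ is unique. I will treat this as the standard fact about $D_{2n}$. If a self-contained argument is wanted, I would realise $r$ and $s$ as a rotation and a reflection of the regular $n$-gon, which shows that all $2n$ elements are distinct. The hypothesis $n>2$ is used only so that $x\mapsto -x$ is a genuine order-$2$ automorphism; the isomorphism computation itself does not depend on it.
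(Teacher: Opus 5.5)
Your proof is correct. The paper gives no proof of this lemma; it is stated as a standard fact and used immediately, so there is no competing argument to compare against. Your direct verification is the natural one to supply: well-definedness from $r^n=s^2=e$, the rule $sr^x=r^{-x}s$ for the homomorphism property, and the normal form $r^xs^y$ for surjectivity.

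The only outside input you use is $|D_{2n}|=2n$, which you justify geometrically. You can avoid it by using the semidirect product itself as the model. In $\mathbb{Z}_n\rtimes_\Phi\mathbb{Z}_2$ one checks that $(1,0)^n=(0,1)^2=(0,0)$ and $(0,1)*_\Phi(1,0)*_\Phi(0,1)=(-1,0)$. So by von Dyck's theorem there is a homomorphism $\theta\colon D_{2n}\to\mathbb{Z}_n\rtimes_\Phi\mathbb{Z}_2$ with $\theta(r)=(1,0)$ and $\theta(s)=(0,1)$. This gives $\theta(r^xs^y)=(x,0)*_\Phi(0,y)=(x,y)$, so $\theta\circ\psi$ is the identity. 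That makes $\psi$ injective directly and yields $|D_{2n}|=2n$ as a by-product. The same argument also confirms your remark that $n>2$ plays no role in the isomorphism itself.
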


The following well-known result shows that the above constructions account for all groups of order $2p$:
\begin{proposition}
Let $p$ be an odd prime. Every group of order $2p$ is isomorphic either to $\mathbb{Z}_{2p}$ ($\cong \mathbb{Z}_p \times \mathbb{Z}_2$) or to $D_{2p}$ ($\mathbb{Z}_p \rtimes_{\Phi} \mathbb{Z}_2$).
\end{proposition}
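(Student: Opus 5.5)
The plan is the standard Sylow argument, finished by identifying the resulting presentation with Lemma \ref{lem:dihedraldef}. Let $K$ be a group of order $2p$.

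First I obtain elements of order $p$ and $2$. By Cauchy's theorem there is an element $r$ of order $p$ and an element $s$ of order $2$. Let $P=\langle r\rangle$. It has index $2$, so it is normal in $K$. (Alternatively, the number of Sylow $p$-subgroups divides $2$ and is $\equiv 1 \pmod p$, hence equals $1$.)

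Next I show $K=PS$ with $S=\langle s\rangle$. Since $|P\cap S|$ divides $\gcd(p,2)=1$, we have $P\cap S=\{e\}$. Then $|PS|=2p$, so $K=PS$. Every element therefore has the form $r^x s^y$ with $0\le x<p$ and $0\le y\le 1$, and the group is determined by the conjugation action of $s$ on $P$.

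The main step is to pin down that action. Normality of $P$ gives $srs^{-1}=r^a$ for some $a\in U(\mathbb{Z}_p)$. Because $s^2=e$, conjugating twice gives $r=r^{a^2}$, so $a^2\equiv 1 \pmod p$. Since $\mathbb{Z}_p$ is a field, $(a-1)(a+1)\equiv 0$ forces $a\equiv \pm1$.

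The two values of $a$ give the two cases:
\begin{itemize}
\item If $a=1$, then $r$ and $s$ commute, and $rs$ has order $\operatorname{lcm}(p,2)=2p$. So $K$ is cyclic, $K\cong\mathbb{Z}_{2p}$, which is isomorphic to $\mathbb{Z}_p\times\mathbb{Z}_2$ by the coprime remark preceding Definition \ref{def:GxZ_2}.
\item If $a=-1$, then $K$ satisfies the relations $r^p=s^2=e$ and $srs=r^{-1}$. The map $D_{2p}\to K$ sending generators to $r,s$ is therefore a surjective homomorphism. Both groups have order $2p$, so it is an isomorphism, and $K\cong D_{2p}\cong \mathbb{Z}_p\rtimes_\Phi\mathbb{Z}_2$ by Lemma \ref{lem:dihedraldef}, applied with $n=p>2$.
\end{itemize}

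The only delicate point is the last identification: it needs either von Dyck's theorem, or the observation that the group defined by this presentation has order at most $2p$ because its elements can be written as $r^x s^y$. Once that is in place, the argument is routine.
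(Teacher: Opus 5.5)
The paper gives no proof of this proposition; it quotes it as a well-known fact. So there is nothing to compare your route against, and your argument, the standard Cauchy/Sylow classification, is correct and complete.

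If you want to avoid von Dyck's theorem and stay within the paper's own semidirect-product language, you can finish more directly. You already have $K=\{r^xs^y\}$ and $sr^xs^{-1}=r^{ax}$ with $a^2\equiv 1$. From these, $r^{x_1}s^{y_1}r^{x_2}s^{y_2}=r^{x_1+a^{y_1}x_2}s^{y_1+y_2}$. Hence $(x,y)\mapsto r^xs^y$ is a bijective homomorphism onto $K$ from $\mathbb{Z}_p\rtimes_\Gamma\mathbb{Z}_2$, where $\Gamma_1$ is multiplication by $a$. The case $a=1$ gives the direct product, and the case $a=-1$ gives $\mathbb{Z}_p\rtimes_\Phi\mathbb{Z}_2$, which Lemma \ref{lem:dihedraldef} identifies with $D_{2p}$.

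If you also want the two alternatives to be mutually exclusive, note that $D_{2p}$ is non-abelian by Lemma \ref{lem:nonabelian}, since inversion has order $2$ in $AUT(\mathbb{Z}_p)$ for odd $p$.
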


What about cases when $\Gamma_1$ in Example \ref{ex:GxZ_2} is not the inversion map?

\begin{example}\label{ex:z8xz2}
Let $G=\mathbb{Z}_8$ and $H=\mathbb{Z}_2$.  Then $AUT(\mathbb{Z}_8)$ has 4 automorphisms, corresponding to multiplication by the elements $1,3,5$ and $7$ of $U(\mathbb{Z}_8)$; all except the identity have order $2$, and it can be checked that $AUT(\mathbb{Z}_8)$ is isomorphic to $\mathbb{Z}_2 \times \mathbb{Z}_2$.  There are 4 homomorphisms from $\mathbb{Z}_2$ to $AUT(\mathbb{Z}_8)$, determined by where they send $1$: $\alpha_1=(x \mapsto x), \beta_1=(x \mapsto 3x), \delta_1=(x \mapsto 5x)$ and $\epsilon_1=(x \mapsto 7x)$. For the corresponding semidirect products, the first yields the direct product.  We see that $\mathbb{Z}_8 \rtimes_{\beta} \mathbb{Z}_2$ has multiplication $(a,b) *_{\beta}(c,d)=(a+3^b c, b+d)$ and $\mathbb{Z}_8 \rtimes_{\delta} \mathbb{Z}_2$ has multiplication $(a,b) *_{\delta}(c,d)=(a+5^b c, b+d)$.  Finally, since $7=-1$ in $\mathbb{Z}_8$, $\mathbb{Z}_8 \rtimes_{\epsilon} \mathbb{Z}_2$ has multiplication $(a,b) *_{\epsilon}(c,d)=(a+(-1)^b c, b+d)$, i.e. this is the dihedral group $D_{16}$ (written additively).  It can be confirmed that these are four non-isomorphic semidirect products.
\end{example}

For $G$ of the form $H\times H$, we can define the following semidirect product with $\mathbb{Z}_2$:

\begin{example}\label{def:wreath}
Let $H$ be an abelian group and let $G=H \times H$. Consider the homomorphism $\Omega$ from $\mathbb{Z}_2$ to $AUT(G)$ given by:
    \[
    \Omega(z) = \begin{cases}
          (x,y) \mapsto (x,y), & z=0 \\
          (x,y) \mapsto (y,x), & z=1. \\
\end{cases}
    \]
A semidirect product may be defined, with multiplication:
\[ ((g_1,g_2),h) *_{\Omega} ((g_1',g_2'),h')=((g_1,g_2)+ \Omega_{h}(g_1',g_2'),h+h')\]
This semidirect product $G \rtimes_{\Omega} \mathbb{Z}_2$ is more commonly known as the \emph {regular wreath product}, denoted $H wr \mathbb{Z}_2$ or $H \wr \mathbb{Z}_2$.
\end{example}

\subsection{Near-factorizations and difference structures}

We introduce notation which we will use throughout the paper. For a multiset $D$ we use $\lambda D$ to denote the multiset consisting of $\lambda$ copies of $D$. 

\begin{definition}\label{def:multi}
Let $K$ be a group with multiplication $*$; let $X$ and $Y$ be subsets of $K$.
\begin{itemize}
\item[(i)]  We denote by $S(X,Y;K)$ the multiset $\{x*y: x \in X, y \in Y\}$. 
\item[(ii)] We denote by $\mathrm{Inv}(X;K)$ the set of inverses of the elements of $X$ in $K$ with respect to the multiplication in $K$.
\item[(iii)] We say that $X$ is \emph{symmetric} in $K$ if $X=\mathrm{Inv}(X;K)$.
\item[(iv)]  We denote by $D(X,Y;K)$ the multiset $S(X,\mathrm{Inv}(Y;K);K)$, and by $D(X; K)$ the multiset $D(X,X;K) \setminus (|X|\{e\})$.
\end{itemize}
\end{definition} 

\begin{remark}\label{rem:symm}
Let $G$ be an abelian group, written additively. For a subset $X$ of the Cartesian product $G \times \mathbb{Z}_2$, we consider the form taken by the symmetric condition in specific semidirect products.
\begin{itemize}
\item In the direct product, $X$ is symmetric precisely if $X=-X$, i.e. if $(x,y) \in X$ if and only if $(-x,y) \in X$.
\item In $Dih(G)$, $X$ is symmetric precisely if $X_0=-X_0$, i.e. if $(x,0) \in X$ if and only if $(-x,0) \in X$.
\item Let $G=H \times H$. In $G \rtimes_{\Omega} \mathbb{Z}_2$, i.e. $H wr \mathbb{Z}_2$, $X$ is symmetric precisely if: $(x,y,0) \in X$ if and only if $(-x,-y,0) \in X$, and $(x,y,1) \in X$ if and only if $(-y,-x,1) \in X$.
\end{itemize}
In particular, any set which is symmetric in the direct product or the wreath product is symmetric in $Dih(G)$, but the converse does not necessarily hold.
\end{remark}

The following definition was given in \cite{Caen, KrePatSti} for the $\lambda=1$ case.  The generalization to $\lambda>1$ was given in \cite{KreLiSti}.

\begin{definition}\label{def:near}
Let $G$ be a group with identity $e$ and let $X,Y$ be subsets of $G$ with $|X|=k$ and $|Y|=l$.  Then $(X;Y)$ form a $\lambda$-fold $(k,l)$-near-factorization of $G$ if the multiset equation
$$ S(X,Y; G)=\lambda(G \setminus \{e\})$$
holds.  If $k=l$ we call this a $\lambda$-fold $k$-near-factorization; if $\lambda=1$ we omit the term $\lambda$-fold. If the parameters are clear from context we simply call it a near-factorization.  
\end{definition}

\begin{example}\label{ex:ABZ_5Z_2}
Consider the sets $A=\{(0,0),(1,1),(4,1)\}$ and $B=\{(0,1),(2,0),(3,0)\}$ in $\mathbb{Z}_5 \times \mathbb{Z}_2$.  They form a $3$-near factorization of the direct product $\mathbb{Z}_5 \times \mathbb{Z}_2$. Since $A=-A$ and $B=-B$, these sets are symmetric in the direct product.
\end{example}

We next present the definitions for various difference structures, both internal and external, which we will work with in this paper.

\begin{definition}
Let $G$ be a group of order $v$ with identity $e$ and let $A$ be a subset of $G$ of size $k$.  Then $A$ is a $(v,k,\lambda)$-difference set (DS) if the following multiset equation holds:
\[ D(A;G)=\lambda(G \setminus \{e\}).\]
\end{definition}

\begin{example}
In $\mathbb{Z}_{11}$, $\{1,3,4,5,9\}$ is an $(11,5,2)$-DS.
\end{example}

\begin{definition}
Let $G$ be a group of order $v$ with identity $e$ and let $m >1$. A family $\{A_0,A_1,\ldots A_{m-1}\}$ of $m$ disjoint subsets of $G$, each of size $l$, is a $(v,m,l,\lambda)$-EDF if the following multiset equation holds:
    \[\bigcup_{i \neq j}D(C_i,C_j;G)=\lambda(G\backslash\{e\}).\]
\end{definition}

\begin{definition}\label{def:GSEDF}
Let $G$ be a group of order $v$ with identity $e$ and let $m >1$. A family $\{A_0,A_1,\ldots A_{m-1}\}$ of $m$ disjoint subsets of $G$ , where $|A_i|=l_i$ is a $(v,m;l_1,\ldots,l_i; \lambda_1, \ldots, \lambda_i)$-GSEDF if the following multiset equation holds for each $i$ ($0 \leq i \leq m-1$):
    \[\bigcup_{j \neq i} D(C_i,C_j;G)=\lambda_i(G\backslash\{e\}).\]
If $|A_0|= \cdots =|A_{m-1}|=l$ (and $\lambda_0= \cdots = \lambda_{m-1}=\lambda$)  then $\{A_0,A_1,\ldots A_{m-1}\}$ is an $(n,m,l; \lambda)$-SEDF. 
\end{definition}

\begin{definition}
    Let $G$ be a group of order $v$ with identity $e$ and let $m >1$. Let $\mathcal{C}=(C_0,C_1,\ldots C_{m-1})$ be an ordered collection of disjoint subsets of $G$, each of size $l$. Then $\mathcal{C}$ is a $(n,m,l,\lambda)$-$c$-CEDF if the following multiset equation holds:
    \[\cup_{i=0}^{m-1}D(C_{i+c \mod m},C_i;G)=\lambda(G\backslash\{0\}).\]
\end{definition}

\begin{example}
In $D_{28}$, let $C_0=\{r^{13},id,r\}$, $C_1=\{r^2,sr^7,r^{12}\}$ and $C_2=\{sr^3,r^7,sr^{11}\}$.  Then $(C_0,C_1,C_2)$ is a $(28,3,3,1)$-$1$-CEDF in $D_{28}$.
\end{example}

The previous three structures can all be considered under the following definition:
\begin{definition}\label{def:H-defined EDF}
Let $G$ be a group of order $v$ with identity $e$, and let $\mathcal{A}=(A_0,\ldots,A_{m-1})$ be an ordered collection of disjoint subsets of $G$, each of size $l$.  Let $H$ be a labelled digraph on $m$ vertices $\{0,1,\ldots,m-1\}$ and let $\E(H)$ be the set of directed edges of $H$.  Then $\mathcal{A}$ is said to be a $(v,m,l,\lambda; H)$-EDF if the following multiset equation holds:
$$ \bigcup_{(i,j) \in \E(H)} D(A_j,A_i;G) = \lambda (G \setminus \{e\}).$$
We will call such a structure a \emph{digraph-defined} EDF.  If we wish to emphasise $H$, we will call it an $H$-defined EDF.
\end{definition}

The motivation for considering near-factorizations in papers such as \cite{KrePatSti} was their connection to two-set generalized strong external difference families. A version of the following result for $\lambda=1$ was given in \cite{KrePatSti}; we provide a proof of the $\lambda$-fold case.

\begin{lemma}\label{NearSEDF}
Let $G$ be a group of order $v$ with identity $e$ and let $A,B \subseteq G$. Then $(A;B)$ is a $\lambda$-fold $(k,l)$-near-factorization of $G$ if and only if $\{A,Inv(B;G)\}$ is a $(v,2;k,l;\lambda,\lambda)$-GSEDF in $G$.
\end{lemma}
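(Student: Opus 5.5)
The plan is to unwind both conditions into multiset equations and observe that they coincide. Put $B'=\mathrm{Inv}(B;G)$, so $|B'|=l$ and $\mathrm{Inv}(B';G)=B$ because inversion is an involution on $G$. By Definition \ref{def:GSEDF}, with two sets $A_0=A$ and $A_1=B'$, the family $\{A,B'\}$ is a $(v,2;k,l;\lambda,\lambda)$-GSEDF exactly when two equations hold:
\[ D(A,B';G)=\lambda(G\setminus\{e\}) \quad\text{and}\quad D(B',A;G)=\lambda(G\setminus\{e\}). \]
The disjointness of $A$ and $B'$ must also be addressed.

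First I will rewrite the two equations. By Definition \ref{def:multi}(iv), $D(A,B';G)=S(A,\mathrm{Inv}(B';G);G)=S(A,B;G)$, so the first equation is precisely the $\lambda$-fold near-factorization condition of Definition \ref{def:near}. The second is
\[ D(B',A;G)=S(\mathrm{Inv}(B;G),\mathrm{Inv}(A;G);G)=\{b^{-1}a^{-1}: a\in A,\ b\in B\}=\{(ab)^{-1}: a\in A,\ b\in B\}. \]
This is the image of the multiset $S(A,B;G)$ under the bijection $g\mapsto g^{-1}$ of $G$. That bijection fixes $e$ and maps $G\setminus\{e\}$ onto itself, so it preserves the multiset $\lambda(G\setminus\{e\})$. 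Hence the second equation holds if and only if the first does, even when $G$ is non-abelian, and the two GSEDF equations together are equivalent to $(A;B)$ being a $\lambda$-fold near-factorization. This gives both directions.

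Next, disjointness. If $(A;B)$ is a near-factorization, then $e\notin S(A,B;G)$, so there is no pair with $ab=e$, i.e.\ no $a\in A$ with $a^{-1}\in B$. Thus $A\cap \mathrm{Inv}(B;G)=\emptyset$. Conversely, disjointness is part of the GSEDF hypothesis, so it imposes no extra condition in that direction.

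The only delicate step is the order reversal $(ab)^{-1}=b^{-1}a^{-1}$ in a non-abelian group. One has to check that the second difference multiset is the inverse image of $S(A,B;G)$, and not some different product such as $S(B,A;G)$. I will write that step out elementwise, so that multiplicities are visibly preserved by the bijection $(a,b)\mapsto b^{-1}a^{-1}$ of index pairs.
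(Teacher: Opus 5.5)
Your proposal is correct and follows essentially the same route as the paper: the first GSEDF equation $D(A,\mathrm{Inv}(B;G);G)=S(A,B;G)$ is exactly the near-factorization condition, the second is its image under $g\mapsto g^{-1}$ via $(ab)^{-1}=b^{-1}a^{-1}$, and disjointness follows from $e\notin S(A,B;G)$. You also observe that inversion preserves $\lambda(G\setminus\{e\})$, which handles both directions at once and is slightly tidier than the paper's separate treatment of the converse.
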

\begin{proof}
For a set $X \subseteq G$, we use $X^{-1}$ to denote $\mathrm{Inv}(X;G)$.
Suppose that $\{A,B\}$ is a $(v,2;k,l;\lambda,\lambda)$-GSEDF in $G$: then $D(A,B;G)=S(A,B^{-1};G)=\lambda(G\backslash\{e\})$, hence $(A;B^{-1})$ is a $\lambda$-fold $(k,l)$-near-factorization of $G$.
    Let $(A;B)$ be a $\lambda$-fold $(k,l)$-near factorization of $G$; then $S(A,B;G)=\lambda(G\backslash\{e\})$. It follows that $D(A,B^{-1};G)=S(A,B;G)=\lambda(G\backslash\{e\})$. Now consider $D(B^{-1},A;G) = S(B^{-1},A^{-1};G)$.  We have:
    \[S(B^{-1},A^{-1};G)=\{xy:x \in B^{-1},y \in A^{-1}\}\]
    \[=\{x^{-1}y^{-1}:x \in B,y \in A\}\]
    \[=\{(yx)^{-1}:x \in B,y \in A\}\]
    As $S(A,B;G)=\{yx:x \in B,y \in A\}=\lambda(G\backslash\{e\})$, it follows that $S(B^{-1},A^{-1};G)=\{(yx)^{-1},x \in B,y \in A\}=\lambda(G\backslash\{e\})$. As $e$ does not occur as a difference between $A$ and $B^{-1}$, the sets are disjoint, hence $\{A;B^{-1}\}$ is a $(v,2,;k,l;\lambda,\lambda)$-GSEDF in $G$.
\end{proof}

\begin{example}
    Using Lemma \ref{NearSEDF} and the sets of the $3$-near-factorization of the direct product $\mathbb{Z}_5 \times \mathbb{Z}_2$ given in Example \ref{ex:ABZ_5Z_2}, we can create a SEDF. The sets for the SEDF are $A$ and $-B$; since $B$ is symmetric in $\mathbb{Z}_5 \times \mathbb{Z}_2$ (i.e. $-B=B$), we have that $\{A,B\}$ is a $(10,2;3,1)$-SEDF in $\mathbb{Z}_5 \times \mathbb{Z}_2$.
\end{example}

\section{Sum and difference multisets and near-factorizations}

We begin by establishing a key result about the multiset product of two sets.  We first need the following definition.

\begin{definition}
Let $G$ be an abelian group and let $f \in AUT(G)$.  We define a new map from $G \times \mathbb{Z}_2$ to $G \times \mathbb{Z}_2$ by
\[ (x,y) \mapsto (f(x),y).\]
It is a group homomorphism of the direct product $G \times \mathbb{Z}_2$, since:
$f((x_1,y_1)*(x_2,y_2))=f(x_1 x_2, y_1 y_2)=(f(x_1 x_2),y_1 y_2)=(f(x_1)f(x_2), y_1 y_2)=(f(x_1),y_1)*(f(x_2),y_2)$.  Injectivity and surjectivity follow from the corresponding properties of $f$, and so this new map lies in $AUT(G \times \mathbb{Z}_2)$.  By abuse of notation, we will refer to this map as $f$ when there is no ambiguity.
\end{definition}

\begin{remark}
If $f$ as defined in $AUT(G)$ naturally defines an automorphism on $\mathbb{Z}_2$ (which necessarily acts as the identity map on $\mathbb{Z}_2$), we may view its extension to $G \times \mathbb{Z}_2$ as applying $f$ to pairs $(x,y) \in G \times \mathbb{Z}_2$.  Examples include: when $f$ is the identity, the inversion map or when $G$ has a subgroup $H$ isomorphic to $\mathbb{Z}_2$ and $f \in AUT(G)$ fixes $H$.  
\end{remark}

We are now ready to state our main result of this section.

\begin{theorem}\label{thm:genhom}
Let $G$ be an abelian group. Let $\Gamma, \Theta: \mathbb{Z}_2 \rightarrow AUT(G)$ be group homomorphisms and let $(G \rtimes_{\Gamma} \mathbb{Z}_2,*_{\Gamma})$ and $(G \rtimes_{\Theta} \mathbb{Z}_2,*_{\Theta})$ be the corresponding semidirect products. Denote by $(G \times \mathbb{Z}_2,*)$ the direct product group. Let $X,Y$ be subsets of the Cartesian product $G \times \mathbb{Z}_2$.

If $\Gamma_1(Y)=\Theta_1(Y)$, then:
\begin{itemize}
\item[(i)] $S(X, Y,G \rtimes_{\Gamma} \mathbb{Z}_2)=S(X,Y; G \rtimes_{\Theta} \mathbb{Z}_2)$;
\item[(ii)] $(X;Y)$ is a $\lambda$-fold near-factorization of $(G \rtimes_{\Gamma} \mathbb{Z}_2,*_{\Gamma})$ precisely if $(X;Y)$ is a $\lambda$-fold near-factorization of $(G \rtimes_{\Theta} \mathbb{Z}_2,*_{\Theta})$.
\end{itemize}
\end{theorem}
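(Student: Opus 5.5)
The plan is to compare the two products element by element, exploiting the fact that in any semidirect product of this form the second coordinate of a product is $h_1+h_2$ regardless of $\Gamma$. Only the first coordinate depends on $\Gamma$, and there only through the automorphism applied to the $G$-part of the right-hand factor. The hypothesis $\Gamma_1(Y)=\Theta_1(Y)$ should be read in the sense of the preceding Definition: $\Gamma_1$ and $\Theta_1$ are extended to $G\times\mathbb{Z}_2$ by acting on the first coordinate. Thus the hypothesis says that the sets $\{(\Gamma_1(y),t):(y,t)\in Y\}$ and $\{(\Theta_1(y),t):(y,t)\in Y\}$ coincide.

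For (i), I will split $X=X_0\sqcup X_1$ according to the second coordinate, as in the notation introduced at the start of Section~2. This gives
\[S(X,Y;G\rtimes_\Gamma\mathbb{Z}_2)=S(X_0,Y;G\rtimes_\Gamma\mathbb{Z}_2)\cup S(X_1,Y;G\rtimes_\Gamma\mathbb{Z}_2)\]
as multisets, and the same decomposition holds for $\Theta$.

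For $(x,0)\in X_0$ we have $(x,0)*_\Gamma(y,t)=(x+y,t)=(x,0)*_\Theta(y,t)$, since both $\Gamma_0$ and $\Theta_0$ are the identity automorphism, $\Gamma$ and $\Theta$ being homomorphisms. So the $X_0$ parts agree element by element.

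For $(x,1)\in X_1$, the map $(y,t)\mapsto(\Gamma_1(y),t)$ is a bijection from $Y$ onto $\Gamma_1(Y)$, and likewise for $\Theta_1$. Hence, as multisets,
\[S(X_1,Y;G\rtimes_\Gamma\mathbb{Z}_2)=\{(x+y',1+t):(x,1)\in X_1,\,(y',t)\in\Gamma_1(Y)\},\]
and the analogous identity holds for $\Theta$ with $\Theta_1(Y)$ in place of $\Gamma_1(Y)$. Because $\Gamma_1(Y)=\Theta_1(Y)$ as sets, and each is the bijective image of $Y$ (so there are no multiplicities), the two multisets coincide. Adding the $X_0$ and $X_1$ parts gives (i).

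Part (ii) then follows immediately. Both semidirect products have identity $(0,0)$ and the same underlying set $G\times\mathbb{Z}_2$. By Definition~\ref{def:near}, $(X;Y)$ is a $\lambda$-fold near-factorization of $G\rtimes_\Gamma\mathbb{Z}_2$ exactly when $S(X,Y;G\rtimes_\Gamma\mathbb{Z}_2)=\lambda((G\times\mathbb{Z}_2)\setminus\{(0,0)\})$. By (i), this is the same condition as for $\Theta$.

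The only real obstacle is bookkeeping. First, the pair-by-pair products for $X_1$ do not match individually; only the reindexing bijection through $\Gamma_1(Y)=\Theta_1(Y)$ matches them. Second, the multiset equality needs that $\Gamma_1$ and $\Theta_1$ are injective on $Y$, which holds because they are automorphisms.
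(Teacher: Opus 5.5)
Your proof is correct and follows essentially the same route as the paper: split $X$ into $X_0$ and $X_1$, use $\Gamma_0=\Theta_0=\mathrm{id}$ on the $X_0$ part, and use $\Gamma_1(Y)=\Theta_1(Y)$ (with $\Gamma_1,\Theta_1$ acting on the first coordinate) on the $X_1$ part. Your reindexing through the bijection $Y\to\Gamma_1(Y)$ also spells out the multiset equality that the paper's chain of equalities leaves implicit.
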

\begin{proof}
First observe that, since $\Gamma, \Theta$ are group homomorphisms, we have $\Gamma_0=\Theta_0=id$. All unions are multiset unions.
\begin{align*}
& S(X,Y; G \rtimes_{\Gamma} \mathbb{Z}_2)\\ 
&= \bigcup_{(x_1,x_2) \in X} S(\{(x_1,x_2)\},Y; G \rtimes_{\Gamma} \mathbb{Z}_2)\\
&= \bigcup_{(x_1,x_2) \in X} \{(x_1,x_2) *_{\Gamma}(y_1,y_2): (y_1,y_2) \in Y\}\\
&= \bigcup_{(x_1,x_2) \in X} \{(x_1  \Gamma_{x_2}(y_1), x_2 y_2): (y_1,y_2) \in Y\}\\
&= \bigcup_{(x_1,x_2) \in X} \{(x_1,x_2) *(\Gamma_{x_2}(y_1),y_2): (y_1,y_2) \in Y\}\\
&= \bigcup_{(x_1,0) \in X_0} \{(x_1,0) *(\Gamma_{0}(y_1),y_2): (y_1,y_2) \in Y\}
\cup \bigcup_{(x_1,1)\in X_1} \{(x_1,1) *(\Gamma_{1}(y_1),y_2): (y_1,y_2) \in Y\}\\
&= \bigcup_{(x_1,0) \in X_0} \{(x_1,0) * (y_1, y_2): (y_1,y_2) \in Y\} \cup  \bigcup_{(x_1,1)\in X_1} \{(x_1,1) *(\Gamma_{1}(y_1, y_2)): (y_1,y_2) \in Y\} \\
&=  \bigcup_{(x_1,0) \in X_0} \{(x_1,0) * (\Theta_0(y_1, y_2)): (y_1,y_2) \in Y\} \cup  \bigcup_{(x_1,1)\in X_1} \{(x_1,1) *(\Theta_{1}(y_1, y_2)): (y_1,y_2) \in Y\} \\
&=  \bigcup_{(x_1,0) \in X_0} \{(x_1,0) * (\Theta_0(y_1), y_2): (y_1,y_2) \in Y\} \cup  \bigcup_{(x_1,1)\in X_1} \{(x_1,1) *(\Theta_{1}(y_1), y_2): (y_1,y_2) \in Y\} \\
&= \bigcup_{(x_1,x_2) \in X} \{(x_1,x_2) *_{\Theta}(y_1,y_2): (y_1,y_2) \in Y\}\\
&=S(X,Y; G \rtimes_{\Theta} \mathbb{Z}_2)
\end{align*}
and so (i) holds.  Part (ii) follows from part (i) and Definition \ref{def:near}.
\end{proof}

We will often use the following special case:
\begin{corollary}\label{cor:gamma_triv}
Let $G$ be an abelian group, let $\Theta: \mathbb{Z}_2 \rightarrow AUT(G)$ be a group homomorphism and let $(G \rtimes_{\Theta} \mathbb{Z}_2,*_{\Theta})$ be the corresponding semidirect product. Denote by $(G \times \mathbb{Z}_2,*)$ the direct product group. Let $X,Y$ be subsets of the Cartesian product $G \times \mathbb{Z}_2$.

If $Y=\Theta_1(Y)$, then:
\begin{itemize}
\item[(i)] $S(X, Y,G \times \mathbb{Z}_2)=S(X,Y; G \rtimes_{\Theta} \mathbb{Z}_2)$;
\item[(ii)] $(X;Y)$ is a $\lambda$-fold near-factorization of the direct product $G \times \mathbb{Z}_2$ precisely if $(X;Y)$ is a $\lambda$-fold near-factorization of $(G \rtimes_{\Theta} \mathbb{Z}_2,*_{\Theta})$.
\end{itemize}
\end{corollary}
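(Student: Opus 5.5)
The plan is to deduce Corollary \ref{cor:gamma_triv} directly from Theorem \ref{thm:genhom}, taking $\Gamma$ to be the trivial homomorphism $\mathbb{Z}_2 \rightarrow AUT(G)$, so that $\Gamma_0=\Gamma_1=id$.

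First, I would check that with this choice the semidirect product $(G \rtimes_{\Gamma} \mathbb{Z}_2, *_{\Gamma})$ is literally the direct product $(G \times \mathbb{Z}_2,*)$. This is the earlier Example about trivial $\Gamma$. Indeed, $(g_1,h_1)*_{\Gamma}(g_2,h_2)=(g_1\Gamma_{h_1}(g_2),h_1+h_2)=(g_1g_2,h_1+h_2)$. So the underlying sets and the multiplications coincide, not merely up to isomorphism. Consequently $S(X,Y;G\rtimes_\Gamma\mathbb{Z}_2)=S(X,Y;G\times\mathbb{Z}_2)$ as multisets. The notion of $\lambda$-fold near-factorization in the two groups also agrees, since the identity $(e_G,0)$ is the same in both.

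Next, I would verify the hypothesis of Theorem \ref{thm:genhom}. Since $\Gamma_1=id$ (extended to $G\times\mathbb{Z}_2$ as in the Definition preceding the theorem), we have $\Gamma_1(Y)=Y$. By assumption $Y=\Theta_1(Y)$, so $\Gamma_1(Y)=\Theta_1(Y)$. Theorem \ref{thm:genhom}(i) then gives $S(X,Y;G\rtimes_\Gamma\mathbb{Z}_2)=S(X,Y;G\rtimes_\Theta\mathbb{Z}_2)$. Combined with the previous step, this yields (i). Theorem \ref{thm:genhom}(ii), rewritten in the same way, yields (ii). Alternatively, (ii) follows from (i) and Definition \ref{def:near}.

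There is essentially no obstacle here. The only point needing care is that the hypothesis is a set equality $\Theta_1(Y)=Y$, where $\Theta_1$ acts on $G\times\mathbb{Z}_2$ by $(y_1,y_2)\mapsto(\Theta_1(y_1),y_2)$. This is exactly the form used in the theorem's proof: there, the multiset $\{(\Gamma_1(y_1),y_2):(y_1,y_2)\in Y\}$ is replaced by $\{(\Theta_1(y_1),y_2)\}$. Since both maps are bijections, equality of image sets is equality of these multisets. So the identification goes through without change.
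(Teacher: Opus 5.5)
Your proposal is correct and matches the paper's approach: the paper states this corollary as the special case of Theorem \ref{thm:genhom} with $\Gamma$ trivial. In that case $G \rtimes_{\Gamma} \mathbb{Z}_2$ is literally the direct product, and the hypothesis $\Gamma_1(Y)=\Theta_1(Y)$ reduces to $Y=\Theta_1(Y)$; your remark that injectivity of $\Gamma_1$ and $\Theta_1$ turns the set equality into a multiset equality correctly justifies a step the theorem's proof uses implicitly.
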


\begin{remark}\label{rem:Y=theta1Y}
We consider the form taken by the condition $Y=\Theta_1(Y)$ in Corollary \ref{cor:gamma_triv} in specific cases.
\begin{itemize}
\item Suppose $\Theta$ is $\Phi$ as given in Example \ref{def:dihedral} (so $\Theta_1$ is the inversion map on $G$, and by extension on $G \times \mathbb{Z}_2$), and $G \rtimes_{\Theta} \mathbb{Z}_2=Dih(G)$.  Then $Y=\Theta_1(Y)$ precisely if $Y=\mathrm{Inv}(Y;G \times \mathbb{Z}_2)$, i.e. precisely if $Y$ is symmetric in $G \times \mathbb{Z}_2$.
\item Suppose $\Theta$ is $\Omega$ as given in Example \ref{def:wreath} (so $G=H \times H$ for some abelian group $H$ and $\Omega_1$ sends $(a,b,c)$ to $(b,a,c)$), and $G \rtimes_{\Theta} \mathbb{Z}_2=H wr\mathbb{Z}_2$. Then $Y=\Theta_1(Y)$ precisely if, for each $(a,b,c) \in Y$, we have $(b,a,c) \in Y$. We will call such a subset $Y$ of $H \times H \times \mathbb{Z}_2$ a $(1,2)$- \emph{flip set}.
\item Suppose $G=\mathbb{Z}_8$ and $\Theta$ is $\delta$ as defined in Example \ref{ex:z8xz2} (so $\Theta_1=x \mapsto 5x$).  Then $Y=\Theta_1(Y)$ precisely if $5Y=Y$.
\end{itemize}
\end{remark}

The first case of Remark \ref{rem:Y=theta1Y} has received attention in the work of Pe\^{c}her \cite{Pec} and Kreher et al \cite{KrePatSti}.  We give an example.
\begin{example}\label{ex:RunNear}
In Corollary \ref{cor:gamma_triv}, let $G=\mathbb{Z}_5$ and let $\Theta$ be $\Phi$ as given in Example \ref{def:dihedral} (so $\Theta_1$ is the inversion map).  Consider the sets $A$ and $B$ in $\mathbb{Z}_5 \times \mathbb{Z}_2$ from Example \ref{ex:ABZ_5Z_2}. Since $A=-A$ and $B=-B$, these sets satisfy the condition of Corollary \ref{cor:gamma_triv}.  As they form a $3$-near factorization of the direct product $G\times \mathbb{Z}_2$, they also form a $3$-near factorization of $\mathbb{Z}_5 \rtimes_\Phi \mathbb{Z}_2=D_{10}$.
\end{example}

When the second semidirect product is $Dih(G)$ in Corollary \ref{cor:gamma_triv}, this leads to the following result which appears in \cite{Pec}.

\begin{corollary}\label{cor:PecGen}
Let $G$ be an abelian group.  Let $X,Y$ be subsets of $G \times \mathbb{Z}_2$. 
If $Y$ is symmetric in the direct product $G \times \mathbb{Z}_2$ then
\begin{itemize}
\item[(i)] $S(X,Y,G \times \mathbb{Z}_2)=S(X,Y, Dih(G))$;
\item[(ii)] if $(X;Y)$ is a $\lambda$-fold near-factorization of $G \times \mathbb{Z}_2$, then $(X;Y)$ is a $\lambda$-fold near-factorization of $Dih(G)$;
\item[(iii)] $Y$ is symmetric in $Dih(G)$.
\end{itemize}
\end{corollary}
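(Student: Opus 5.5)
Parts (i) and (ii) come straight from Corollary \ref{cor:gamma_triv}, taking $\Theta=\Phi$ from Example \ref{def:dihedral}, so that $G \rtimes_{\Theta}\mathbb{Z}_2 = Dih(G)$. By the first item of Remark \ref{rem:Y=theta1Y}, the hypothesis $Y=\Theta_1(Y)$ is equivalent to $Y$ being symmetric in the direct product $G\times\mathbb{Z}_2$. I would spell this out. $\Phi_1$ acts on $G\times\mathbb{Z}_2$ by $(y_1,y_2)\mapsto(-y_1,y_2)$. The inverse of $(y_1,y_2)$ in the direct product is $(-y_1,-y_2)=(-y_1,y_2)$, because $-y_2=y_2$ in $\mathbb{Z}_2$. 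Hence $\Phi_1(Y)=\mathrm{Inv}(Y;G\times\mathbb{Z}_2)=Y$.

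Corollary \ref{cor:gamma_triv}(i) then gives $S(X,Y;G\times\mathbb{Z}_2)=S(X,Y;Dih(G))$, which is (i). For (ii), if $S(X,Y;G\times\mathbb{Z}_2)=\lambda((G\times\mathbb{Z}_2)\setminus\{e\})$, then the same multiset equation holds in $Dih(G)$. The underlying set and the identity $(0,0)$ are the same in both groups, so $(X;Y)$ is a $\lambda$-fold near-factorization of $Dih(G)$. In fact Corollary \ref{cor:gamma_triv}(ii) gives the equivalence, but only one direction is needed.

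For (iii) I would use Remark \ref{rem:symm}. By the inverse formula, the inverse of $(g,h)$ in $Dih(G)$ is $(\Phi_{h}(-g),h)$. This equals $(-g,0)$ when $h=0$ and $(g,1)$ when $h=1$. So $Y$ is symmetric in $Dih(G)$ exactly when $Y_0=-Y_0$, with no condition on $Y_1$. Symmetry in the direct product says $(y,h)\in Y \iff (-y,h)\in Y$ for each $h$, and in particular for $h=0$. That gives $Y_0=-Y_0$, hence (iii).

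There is no real obstacle here. The only care needed is the bookkeeping of identifying $\Phi_1$, extended to $G\times\mathbb{Z}_2$, with inversion in the direct product. This uses that $\mathbb{Z}_2$-coordinates are self-inverse, which is exactly the identification in Remark \ref{rem:Y=theta1Y}.
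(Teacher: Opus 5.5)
Your proposal is correct and follows essentially the same route as the paper. Parts (i) and (ii) come from Corollary~\ref{cor:gamma_triv} with $\Theta=\Phi$, using that $\Phi_1$ extended to $G\times\mathbb{Z}_2$ is inversion in the direct product; part (iii) follows from the $Dih(G)$ inverses $(-g,0)$ and $(g,1)$, so that only $Y_0=-Y_0$ is required.
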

\begin{proof}
For (i), apply Corollary \ref{cor:gamma_triv} by taking $\Theta$ to be $\Phi$ as given in Example \ref{def:dihedral}, so $\Theta_1$ is the inversion map.  Since $Y$ is symmetric in $G \times \mathbb{Z}_2$, $Y=\mathrm{Inv}(Y; G \times \mathbb{Z}_2)$, so $Y=\Theta_1(Y)$ in this case and (i) follows. Part (ii) is then an immediate consequence of (i). For (iii), recall that for $(y_1,y_2)\in Y$, its inverse in $Dih(G)$ is:
    \begin{itemize}
        \item $(y_1^{-1},y_2)$, if $y_2=0$, and 
        \item $(y_1,y_2)$, if $y_2=1$.
    \end{itemize}
Since $Y$ is symmetric in $G \times \mathbb{Z}_2$, we have $(y_1^{-1},y_2) \in Y$ and so $\mathrm{Inv}(Y; Dih(G)) \subseteq Y$.  Moreover any $(y_1,0) \in Y$ is the inverse of $(y_1^{-1},0)$ and any $(y_1,1) \in Y$ is self-inverse. Hence $Y=\mathrm{Inv}(Y,Dih(G))$.
\end{proof}

This gives broader context for results obtained in the $\lambda=1$ case by P\^{e}cher \cite{Pec} and later by Kreher, Paterson and Stinson \cite{KrePatSti}.  In \cite{KrePatSti}, the case of symmetric near-factorizations is considered in $\mathbb{Z}_{2n}$ ($n$ odd) (isomorphic to $\mathbb{Z}_n \times \mathbb{Z}_2$), and corresponding near-factorizations are obtained in $D_{2n}$. We note that the direct product results of Corollary \ref{cor:PecGen} hold for $\mathbb{Z}_n \times \mathbb{Z}_2$ for even $n$ also, though the group is not cyclic.

Examples and constructions for $\lambda$-fold near-factorizations with $\lambda>1$ in small abelian groups are presented in \cite{KreLiSti}.  In the tables of results, it is noted whether the near-factorization is equivalent to a symmetric example (and it is observed that in the abelian setting, if one set in a near-factorization is symmetric then the other set is necessarily symmetric also).  

All symmetric $\lambda$-fold near-factorizations in groups of the form $G \times \mathbb{Z}_2$ presented in $\cite{KreLiSti}$ form symmetric $\lambda$-fold near-factorizations of $Dih(G)$ by Corollary \ref{cor:PecGen}.  Similarly to the near-factorizations in \cite{Pec,KrePatSti}, those in $\mathbb{Z}_n \times \mathbb{Z}_2$ lead to near-factorizations in $D_{2n}$.  However results are obtainable in other non-abelian groups, as the following example illustrates.

\begin{example}
A $4$-fold near-factorization for $\mathbb{Z}_4 \times(\mathbb{Z}_2)^2$ is given in \cite{KreLiSti} with one set being
$$ S=\{(0,0,0), (0,0,1),(0,1,0),(1,0,0),(2,1,1),(3,0,0)\}$$
and the other set 
$$T=\{(0,1,1),(1,0,1),(1,1,0),(2,0,0),(2,0,1),(2,1,0),(3,0,1),(3,1,0),(3,1,1)\}.$$
These are both symmetric in $G \times \mathbb{Z}_2$ where $G=\mathbb{Z}_4 \times \mathbb{Z}_2$. So by Corollary \ref{cor:PecGen}, $(S;T)$ is a $4$-fold near-factorization in $Dih(\mathbb{Z}_4 \times \mathbb{Z}_2)$. 
\end{example}

This approach extends beyond the inversion map to many other settings.  We present two examples of near-factorizations in $\mathbb{Z}_8 \times \mathbb{Z}_2$ listed as non-symmetric in \cite{KreLiSti}.  When we consider these in terms of semidirect products which are not dihedral, we can obtain new examples of near-factorizations in different non-abelian groups.

\begin{example}\label{ex:4NFZ8xZ2}
Let $G=\mathbb{Z}_8$ and $H=\mathbb{Z}_2$. Recall from Example \ref{ex:z8xz2} that there are four homomorphisms from $\mathbb{Z}_2$ to $AUT(\mathbb{Z}_8)$, determined by where they send $1$: $\alpha_1=(x \mapsto x), \beta_1=(x \mapsto 3x), \delta_1=(x \mapsto 5x)$ and $\epsilon_1=(x \mapsto 7x)$. All of these behave as the identity on $\mathbb{Z}_2$.

A $4$-fold near-factorization in $\mathbb{Z}_8 \times \mathbb{Z}_2$ is given by $(S;T)$ where:
$$S=\{(0,0),(1,0),(2,0),(5,0),(0,1),(6,1)\}$$ 
and 
$$ T=\{(1,0),(2,0),(4,0),(5,0),(1,1),(3,1),(4,1),(5,1),(6,1),(7,1)\}.$$
Here $S=5S=\delta_1(S)$ and $T=5T=\delta_1(T)$.  So $\alpha_1(T)=\delta_1(T)$ and, by Theorem \ref{thm:genhom}, $(S;T)$ is a $4$-fold near-factorization of the non-abelian group $\mathbb{Z}_8 \rtimes_{\delta} \mathbb{Z}_2$.  Recall that $\mathbb{Z}_8 \rtimes_{\delta} \mathbb{Z}_2$ has multiplication $(a,b) *_{\delta}(c,d)=(a+5^b c, b+d)$.  The multiplication table of $S(S,T; \mathbb{Z}_8 \rtimes_{\delta} \mathbb{Z}_2)$ is shown in Table \ref{table:4NFZ_8betaZ_2}, with the elements in each row in bold which have been permuted with respect to the multiplication table of $S(S,T; \mathbb{Z}_8 \times \mathbb{Z}_2)$.

Note also that $(-1)T=3T$, i.e. $\epsilon_1(T)=\beta_1(T)$ and so we can apply Theorem \ref{thm:genhom} to see that $S(S,T; D_{16})=S(S,T; \mathbb{Z}_8 \rtimes_{\beta} \mathbb{Z}_2)$.  In this case we do not have a near-factorization since (for example) both sets contain the element $(6,1)$ which is self-inverse in both groups, so the product multiset contains the identity.
\end{example}

\begin{table}[h]
\renewcommand{\arraystretch}{1.2}
\centering
\setlength\tabcolsep{3pt}
\begin{tabular}{|c|cccccccccc|} 
\hline
$S \backslash T$ & $(1,0)$ & $(2,0)$ & $(4,0)$ & $(5,0)$ & $(1,1)$ & $(3,1)$ & $(4,1)$ & $(5,1)$ & $(6,1)$ & $(7,1)$  \\ \hline
$(0,0)$ & $(1,0)$ & $(2,0)$ & $(4,0)$ & $(5,0)$ & $(1,1)$ & $(3,1)$ & $(4,1)$ & $(5,1)$ & $(6,1)$ & $(7,1)$ 
\\
$(1,0)$ & $(2,0)$ & $(3,0)$ & $(5,0)$ & $(6,0)$ & $(2,1)$ & $(4,1)$ & $(5,1)$ & $(6,1)$ & $(7,1)$ & $(0,1)$ 
\\
$(2,0)$ & $(3,0)$ & $(4,0)$ & $(6,0)$ & $(7,0)$ & $(3,1)$ & $(5,1)$ & $(6,1)$ & $(7,1)$ & $(0,1)$ & $(1,1)$ 
\\
$(5,0)$ & $(6,0)$ & $(7,0)$ & $(1,0)$ & $(2,0)$ & $(6,1)$ & $(0,1)$ & $(1,1)$ & $(2,1)$ & $(3,1)$ & $(4,1)$ 
\\
$(0,1)$ & $\textbf{(5,1)}$ & $(2,1)$ & $(4,1)$ & $\textbf{(1,1)}$ & $\textbf{(5,0)}$ & $\textbf{(7,0)}$ & $(4,0)$ & $\textbf{(1,0)}$ & $(6,0)$ & $\textbf{(3,0)}$ 
\\
$(6,1)$ & $\textbf{(3,1)}$ & $(0,1)$ & $(2,1)$ & $\textbf{(4,1)}$ & $\textbf{(1,1)}$ & $\textbf{(5,0)}$ & $(2,0)$ & $\textbf{(7,0)}$ & $(4,0)$ & $\textbf{(1,0)}$ 
\\
\hline
\end{tabular}
\caption{Near-factorization in $\mathbb{Z}_8 \rtimes_{\delta} \mathbb{Z}_2$ from Example \ref{ex:4NFZ8xZ2}}
\label{table:4NFZ_8betaZ_2}
\end{table}

\begin{example}\label{ex:3NFZ8xZ2}
A $3$-fold near-factorization in $\mathbb{Z}_8 \times \mathbb{Z}_2$ is given by 
$$S=\{(0,0),(0,1),(1,0),(3,0),(4,0)\}$$
and 
$$T=\{(1,0),(1,1),(2,0),(3,0),(3,1),(4,1),(5,1),(6,0),(7,1)\}.$$  
Since $S=3S$ and $T=3T$, $\alpha_1(T)=\beta_1(T)$, Theorem \ref{thm:genhom} guarantees that $(S;T)$ is a $3$-fold near-factorization in the non-abelian group $\mathbb{Z}_8 \rtimes_{\beta} \mathbb{Z}_2$.  Recall that $\mathbb{Z}_8 \rtimes_{\beta} \mathbb{Z}_2$ has multiplication $(a,b) *_{\beta}(c,d)=(a+3^b c, b+d)$.
\end{example}

In our final application of this section, we illustrate how our transfer method may take a near-factorization from one non-abelian group to another.

\begin{example}\label{ex:order50}
The $1$-fold near-factorization of $Dih(\mathbb{Z}_5 \times \mathbb{Z}_5)$ with sets
\begin{itemize}
\item $X=\{(0,0,0), (0,1,0),(1,0,0),(2,2,0),(0,0,1),(0,1,1),(1,0,1)\}$
\item $Y=\{(1,4,0),(4,1,0),(4,4,0),(0,3,1),(2,2,1),(3,0,1),(3,3,1)\}$.  
\end{itemize}
is given in \cite{Pec}, then reproduced in \cite{KrePatSti} as being equivalent to one of two computationally-found non-equivalent possibilities.  It cannot be converted to a near-factorization of the direct product $(\mathbb{Z}_5)^2 \times \mathbb{Z}_2$ as $Y \neq -Y$.\\
Consider the order-2 automorphisms of $(\mathbb{Z}_5)^2$ given by $\Phi:(x,y) \mapsto (-x,-y)$ and $\Gamma:(x,y) \mapsto (-y,-x)$, and the corresponding semidirect products $(\mathbb{Z}_5)^2 \rtimes_{\Phi} \mathbb{Z}_2$ and $(\mathbb{Z}_5)^2 \rtimes_{\Gamma} \mathbb{Z}_2$, constructed as in Definition \ref{def:GxZ_2}. Clearly the former semidirect product is $Dih(\mathbb{Z}_5 \times \mathbb{Z}_5)$. It can be proved that these two semidirect products are non-isomorphic (the former has trivial centre whereas the latter has a centre of order 5) and that the semidirect product arising from $\Gamma$ is isomorphic to the wreath product $\mathbb{Z}_5 wr \mathbb{Z}_2$.
Observe that 
$$ \Phi_1(Y)=\{(4,1,0),(1,4,0),(1,1,0),(0,2,1),(3,3,1),(2,0,1),(2,2,1)=\Gamma_1(Y).$$
So by Theorem \ref{thm:genhom}, the above near-factorization of $Dih(\mathbb{Z}_5 \times \mathbb{Z}_5)$ yields a $1$-fold near-factorization of the distinct non-abelian group $(\mathbb{Z}_5)^2 \rtimes_{\Gamma} \mathbb{Z}_2$. The corresponding subtraction tables are presented in Table \ref{table:Dih(Z_5^2)A} and Table \ref{table:Dih(Z_5^2)B}.
\end{example}

\begin{table}[h]
\renewcommand{\arraystretch}{1.2}
\centering
\setlength\tabcolsep{3pt}
\begin{tabular}{|c|ccccccc|} 
\hline
$X \backslash Y$ & $(0,0,0)$ & $(0,1,0)$ & $(1,0,0)$ & $(2,2,0)$ & $(0,0,1)$ & $(0,1,1)$ & $(1,0,1)$ \\ \hline
$(1,4,0)$ & $(1,4,0)$ & $(1,0,0)$ & $(2,4,0)$ & $(3,1,0)$ & $(1,4,1)$ & $(1,0,1)$ & $(2,4,1)$ 
\\
$(4,1,0)$ & $(4,1,0)$ & $(4,2,0)$ & $(0,1,0)$ & $(1,3,0)$ & $(4,1,1)$ & $(4,2,1)$ & $(0,1,1)$ 
\\
$(4,4,0)$ & $(4,4,0)$ & $(4,0,0)$ & $(0,4,0)$ & $(1,1,0)$ & $(4,4,1)$ & $(4,0,1)$ & $(0,4,1)$ 
\\
$(0,3,1)$ & $(0,3,1)$ & $(0,2,1)$ & $(4,3,1)$ & $(3,1,1)$ & $(0,3,0)$ & $(0,2,0)$ & $(4,3,0)$ 
\\
$(2,2,1)$ & $(2,2,1)$ & $(2,1,1)$ & $(1,2,1)$ & $(0,0,1)$ & $(2,2,0)$ & $(2,1,0)$ & $(1,2,0)$ 
\\
$(3,0,1)$ & $(3,0,1)$ & $(3,4,1)$ & $(2,0,1)$ & $(1,3,1)$ & $(3,0,0)$ & $(3,4,0)$ & $(2,0,0)$ 
\\
$(3,3,1)$ & $(3,3,1)$ & $(3,2,1)$ & $(2,3,1)$ & $(1,1,1)$ & $(3,3,0)$ & $(3,2,0)$ & $(2,3,0)$ 
\\
\hline
\end{tabular}
\caption{Near-factorization of $Dih(\mathbb{Z}_5 \times \mathbb{Z}_5)$ from Example \ref{ex:order50}}
\label{table:Dih(Z_5^2)A}
\end{table}

\begin{table}[h]
\renewcommand{\arraystretch}{1.2}
\centering
\setlength\tabcolsep{3pt}
\begin{tabular}{|c|ccccccc|} 
\hline
$X \backslash Y$ & $(0,0,0)$ & $(0,1,0)$ & $(1,0,0)$ & $(2,2,0)$ & $(0,0,1)$ & $(0,1,1)$ & $(1,0,1)$ \\ \hline
$(1,4,0)$ & $(1,4,0)$ & $(1,0,0)$ & $(2,4,0)$ & $(3,1,0)$ & $(1,4,1)$ & $(1,0,1)$ & $(2,4,1)$ 
\\
$(4,1,0)$ & $(4,1,0)$ & $(4,2,0)$ & $(0,1,0)$ & $(1,3,0)$ & $(4,1,1)$ & $(4,2,1)$ & $(0,1,1)$ 
\\
$(4,4,0)$ & $(4,4,0)$ & $(4,0,0)$ & $(0,4,0)$ & $(1,1,0)$ & $(4,4,1)$ & $(4,0,1)$ & $(0,4,1)$ 
\\
$(0,3,1)$ & $(0,3,1)$ & $(4,3,1)$ & $(0,2,1)$ & $(3,1,1)$ & $(0,3,0)$ & $(4,3,0)$ & $(0,2,0)$ 
\\
$(2,2,1)$ & $(2,2,1)$ & $(1,2,1)$ & $(2,1,1)$ & $(0,0,1)$ & $(2,2,0)$ & $(1,2,0)$ & $(2,1,0)$ 
\\
$(3,0,1)$ & $(3,0,1)$ & $(2,0,1)$ & $(3,4,1)$ & $(1,3,1)$ & $(3,0,0)$ & $(2,0,0)$ & $(3,4,0)$ 
\\
$(3,3,1)$ & $(3,3,1)$ & $(2,3,1)$ & $(3,2,1)$ & $(1,1,1)$ & $(3,3,0)$ & $(2,3,0)$ & $(3,2,0)$ 
\\
\hline
\end{tabular}
\caption{Near-factorization of $(\mathbb{Z}_5)^2 \rtimes_{\Gamma} \mathbb{Z}_2$ from Example \ref{ex:order50}}
\label{table:Dih(Z_5^2)B}
\end{table}

\subsection{Relating near-factorizations to difference structures}

Our next aim is to utilise the results obtained in the previous section for near-factorizations of semidirect products, to obtain difference structures in non-abelian groups.  This provides new context for known structures which are not yet well-understood (such as non-abelian difference sets) and new examples and constructions of difference structures in groups for which few (or none) are currently known.  In particular, there are no known infinite families of circular external difference families in nonabelian groups.  However, this transition is not a straightforward process.

To convert results from near-factorizations of semidirect products to results on difference structures in these groups, we must take into account that the inverse of a set $X$ in the Cartesian product $G \times \mathbb{Z}_2$ may be different in different semidirect products of $G$ and $\mathbb{Z}_2$.

\begin{example}\label{ex:inverses}
In  $\mathbb{Z}_8 \times \mathbb{Z}_2$, consider the $(16,6,2)$-DS from \cite{KreLiSti}: 
$$S=\{(0,0),(1,0),(2,0),(5,0),(0,1),(6,1)\}.$$
\begin{itemize}
\item In the direct product, the inverse of $(a,b)$ is $(-a,b)$ and so
$$\mathrm{Inv}(S;\mathbb{Z}_8 \times \mathbb{Z}_2)=\{(0,0),(7,0),(6,0),(3,0),(0,1),(2,1)\}.$$
\item In $\mathbb{Z}_8 \rtimes_{\beta} \mathbb{Z}_2$ as defined in Example \ref{ex:z8xz2}, the inverse of $(a,0)$ is $(-a,0)$ and the inverse of $(a,1)$ is $(-3a,1)=(5a,1)$. So
$$\mathrm{Inv}(S,\mathbb{Z}_8 \rtimes_{\delta} \mathbb{Z}_2)=\{(0,0),(7,0),(6,0),(3,0),(0,1),(6,1)\}.$$ 
\item In $\mathbb{Z}_8 \rtimes_{\delta} \mathbb{Z}_2$ as defined in Example \ref{ex:z8xz2}, the inverse of $(a,0)$ is $(-a,0)$ and the inverse of $(a,1)$ is $(-5a,1)=(3a,1)$. So
$$\mathrm{Inv}(S,\mathbb{Z}_8 \rtimes_{\delta} \mathbb{Z}_2)=\{(0,0),(7,0),(6,0),(3,0),(0,1),(2,1)\}.$$ 
\item In $\mathbb{Z}_8 \rtimes_{\epsilon} \mathbb{Z}_2 \cong D_{16}$ as defined in Example \ref{ex:z8xz2}, the inverse of $(a,0)$ is $(-a,0)$ and the inverse of $(a,1)$ is $(-7a,1)=(a,1)$ (i.e. $(a,1)$ is self-inverse). 
So 
$$\mathrm{Inv}(S; D_{16})=\{(0,0),(7,0),(6,0),(3,0),(0,1),(6,1)\}.$$ 
\end{itemize}
So $S$ has two different inverse sets in the four possible semidirect products, and is symmetric in none.
\end{example}

We next present conditions which will allow us to take  difference structures in one semidirect product of $G$ and $\mathbb{Z}_2$, and obtain difference structures in another.

\begin{theorem}\label{thm:StoD}
Let $G$ be an abelian group. Let $\Gamma, \Theta: \mathbb{Z}_2 \rightarrow AUT(G)$ be group homomorphisms and let $(G \rtimes_{\Gamma} \mathbb{Z}_2,*_{\Gamma})$ and $(G \rtimes_{\Theta} \mathbb{Z}_2,*_{\Theta})$ be the corresponding semidirect products. Denote by $(G \times \mathbb{Z}_2,*)$ the direct product group. Let $X,Y$ be subsets of the Cartesian product $G \times \mathbb{Z}_2$.

Suppose that $\mathrm{Inv}(Y;G \rtimes_{\Gamma} \mathbb{Z}_2)=\mathrm{Inv}(Y;G \rtimes_{\Theta} \mathbb{Z}_2)$ (denote this set by $Z$) and that $\Gamma_1(Z)=\Theta_1(Z)$.  Then:  
\begin{itemize}
\item[(i)] $D(X, Y;  G \rtimes_{\Gamma} \mathbb{Z}_2)= D(X,Y; G \rtimes_{\Theta} \mathbb{Z}_2)$.
\item[(ii)]$\{X,Y\}$ is a $(2|G|,2;|X|,|Y|;\lambda,\lambda)$-GSEDF in $(G \rtimes_{\Gamma} \mathbb{Z}_2,*_{\Gamma})$ if and only if $\{X,Y\}$ is a $(2|G|,2;|X|,|Y|;\lambda,\lambda)$-GSEDF in $(G \rtimes_{\Theta} \mathbb{Z}_2,*_{\Theta})$. 
\end{itemize}
If $X=Y$ then
\begin{itemize}
\item[(iii)] $D(X;  G \rtimes_{\Gamma} \mathbb{Z}_2)= D(X; G \rtimes_{\Theta} \mathbb{Z}_2)$.
\item[(iv)] $X$ is a $(2|G|,|X|,\lambda)$-DS in $(G \rtimes_{\Gamma} \mathbb{Z}_2,*_{\Gamma})$ if and only if $X$ is a $(2|G|,|X|,\lambda)$-DS in $(G \rtimes_{\Theta} \mathbb{Z}_2,*_{\Theta})$.
\end{itemize}
\end{theorem}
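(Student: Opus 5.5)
The plan is to reduce everything to Theorem \ref{thm:genhom}(i), using the definition $D(X,Y;K)=S(X,\mathrm{Inv}(Y;K);K)$ from Definition \ref{def:multi}. By hypothesis the inverse set $Z=\mathrm{Inv}(Y;G\rtimes_\Gamma\mathbb{Z}_2)=\mathrm{Inv}(Y;G\rtimes_\Theta\mathbb{Z}_2)$ is the same in both groups, so
\[
D(X,Y;G\rtimes_\Gamma\mathbb{Z}_2)=S(X,Z;G\rtimes_\Gamma\mathbb{Z}_2), \qquad D(X,Y;G\rtimes_\Theta\mathbb{Z}_2)=S(X,Z;G\rtimes_\Theta\mathbb{Z}_2).
\]
The second hypothesis $\Gamma_1(Z)=\Theta_1(Z)$ is exactly the condition of Theorem \ref{thm:genhom} applied to the pair $(X,Z)$. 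That theorem then gives equality of the two product multisets, which proves (i).

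For (ii), a GSEDF $\{X,Y\}$ with both parameters $\lambda$ requires two multiset identities: $D(X,Y;K)=\lambda(K\setminus\{e\})$ and $D(Y,X;K)=\lambda(K\setminus\{e\})$, together with disjointness of $X$ and $Y$. Disjointness depends only on the underlying Cartesian set, and the identity $(0,0)$ is the same in every semidirect product. Part (i) transfers the first identity between the two groups. For the second identity I will not try to transfer it directly, since the hypotheses say nothing about $\mathrm{Inv}(X;\cdot)$. Instead I will use the argument in the proof of Lemma \ref{NearSEDF}: in any group $K$,
\[
D(Y,X;K)=\{yx^{-1}\}=\{(xy^{-1})^{-1}\}=\mathrm{Inv}(D(X,Y;K);K)
\]
as multisets. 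Since inversion is a bijection fixing $e$, the identity $D(X,Y;K)=\lambda(K\setminus\{e\})$ implies $D(Y,X;K)=\lambda(K\setminus\{e\})$. So in each group the GSEDF condition is equivalent to the single condition $D(X,Y;K)=\lambda(K\setminus\{e\})$ plus disjointness, and (ii) follows from (i).

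For (iii) and (iv), set $X=Y$. Then $D(X;K)=D(X,X;K)$ with $|X|$ copies of $e$ removed. The number of copies removed is the same in both groups, and (i) says the full multisets agree, so (iii) follows. Part (iv) is then immediate from the definition of a difference set.

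The only real subtlety I expect is in (ii): only one of the two directed difference multisets can be controlled by the hypotheses, and the other must be recovered through the inversion symmetry. Everything else is bookkeeping, namely checking that the multiset operations and the removal of identity copies are compatible.
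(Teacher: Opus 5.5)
Your proposal is correct and follows essentially the same route as the paper. Part (i) comes from rewriting $D(X,Y;K)$ as $S(X,Z;K)$ and applying Theorem \ref{thm:genhom} to the pair $(X,Z)$; part (ii) combines (i) with the inversion symmetry $D(Y,X;K)=\mathrm{Inv}(D(X,Y;K);K)$, which is exactly what the paper invokes by citing Lemma \ref{NearSEDF}; and parts (iii)--(iv) follow by removing the same $|X|$ copies of the common identity $(0,0)$.
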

\begin{proof}
Consider $D(X, Y;  G \rtimes_{\Gamma} \mathbb{Z}_2)$; this is $S(X,\mathrm{Inv}(Y;G \rtimes_{\Gamma} \mathbb{Z}_2);  G \rtimes_{\Gamma} \mathbb{Z}_2)$.  Under the stated assumptions, by Theorem \ref{thm:genhom}, this is $S(X,\mathrm{Inv}(Y;G \rtimes_{\Theta} \mathbb{Z}_2);  G \rtimes_{\Theta} \mathbb{Z}_2)$, i.e. $D(X,Y; G \rtimes_{\Theta} \mathbb{Z}_2)$ as required for (i).  Part (ii) holds by combining (i) with Lemma \ref{NearSEDF}. Part (iii) follows from Definition \ref{def:multi}, since the stated multisets are obtained by removing $|X|$ copies of the identity from the multisets of (i). Part(iv) follows from (iii).
\end{proof}

\begin{example}\label{ex:(16,6,2)delta}
In Theorem \ref{thm:StoD}, take $G=\mathbb{Z}_8$, $\Gamma$ to be the trivial map and $\Theta$ to be $\delta$ as defined in Example \ref{ex:z8xz2} (so $\Theta_1=x \mapsto 5x$).  Consider the $(16,6,2)$-DS in $\mathbb{Z}_8 \times \mathbb{Z}_2$ from Example \ref{ex:inverses}:
$$S=\{(0,0),(1,0),(2,0),(5,0),(0,1),(6,1)\}.$$ 
By Example \ref{ex:inverses}, $\mathrm{Inv}(S;\mathbb{Z}_8 \rtimes_{\delta} \mathbb{Z}_2)=\mathrm{Inv}(S;\mathbb{Z}_8 \times \mathbb{Z}_2)=Z$.  Moreover $Z=5Z$ and so in particular $\Gamma_1(Z)=\Theta_1(Z)$.  Hence $S$ is a $(16,6,2)$-DS in the nonabelian group $\mathbb{Z}_8 \rtimes_{\delta} \mathbb{Z}_2$.  The difference table for this is given in Table \ref{table:(16,6,2)delta}, with the elements in bold which have been permuted with respect to the difference table in the direct product.
\end{example}

\begin{table}[h]
\renewcommand{\arraystretch}{1.2}
\centering
\setlength\tabcolsep{3pt}
\begin{tabular}{|c|cccccc|} 
\hline
$S \backslash Z$ & $(0,0)$ & $(7,0)$ & $(6,0)$ & $(3,0)$ & $(0,1)$ & $(2,1)$   \\ \hline
$(0,0)$ & $-$ & $(7,0)$ & $(6,0)$ & $(3,0)$ & $(0,1)$ & $(2,1)$
\\
$(1,0)$ & $(1,0)$ & $-$ & $(7,0)$ & $(4,0)$ & $(1,1)$ & $(3,1)$
\\
$(2,0)$ & $(2,0)$ & $(1,0)$ & $-$ & $(5,0)$ & $(2,1)$ & $(4,1)$
\\
$(5,0)$ & $(5,0)$ & $(4,0)$ & $(3,0)$ & $-$ & $(5,1)$ & $(7,1)$
\\
$(0,1)$ & $(0,1)$ & $\textbf{(3,1)}$ & $(6,1)$ & $\textbf{(7,1)}$ & $-$ & $(2,0)$ 
\\
$(6,1)$ & $(6,1)$ & $\textbf{(1,1)}$ & $(4,1)$ & $\textbf{(5,1)}$ & $(6,0)$ & $-$ 
\\
\hline
\end{tabular}
\caption{Addition table for difference set in $\mathbb{Z}_8 \rtimes_{\delta} \mathbb{Z}_2$ from Example \ref{ex:(16,6,2)delta}}
\label{table:(16,6,2)delta}
\end{table}

Theorem \ref{thm:StoD}(iii) and (iv) relate to Theorem 1.1 of \cite{Swa}, and some consequences proved in \cite{Swa} for difference sets and partial difference sets (including a partial converse of Dillon's Dihedral Trick).  This provides context for their observation that their partial converse of the Dihedral Trick requires the difference sets to be reversible.   In the $Dih(G)$ setting, we obtain a simpler result (also including a partial converse of Dillon's trick). 

\begin{corollary}\label{cor:Dih_StoD}
Let $G$ be an abelian group.  Let $X,Y$ be subsets of the Cartesian product $G \times \mathbb{Z}_2$. 
Suppose that $Y$ is symmetric in the direct product $G \times \mathbb{Z}_2$.  Then
\begin{itemize}
\item[(i)]$D(X, Y;  G \times \mathbb{Z}_2)= D(X,Y; Dih(G))$.
\item[(ii)] $\{X,Y\}$ is a $(2|G|,2;|X|,|Y|;\lambda,\lambda)$-GSEDF in $G \times \mathbb{Z}_2$ if and only if $\{X,Y\}$ is a $(2|G|,2;|X|,|Y|;\lambda,\lambda)$-GSEDF in $Dih(G)$.
\end{itemize}
If $X=Y$
\begin{itemize}
\item[(iii)] $D(X;  G \times \mathbb{Z}_2)= D(X; Dih(G))$.
\item[(iv)] $X$ is a $(2|G|,|X|,\lambda)$-DS in $G \times \mathbb{Z}_2$ if and only if $X$ is a $(2|G|,|X|,\lambda)$-DS in $Dih(G)$.
\end{itemize}
\end{corollary}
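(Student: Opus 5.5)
The plan is to derive this corollary directly from Theorem \ref{thm:StoD}. We take $\Gamma$ to be the trivial homomorphism, so that $G \rtimes_\Gamma \mathbb{Z}_2$ is the direct product $G \times \mathbb{Z}_2$. We take $\Theta=\Phi$ as in Example \ref{def:dihedral}, so that $G \rtimes_\Theta \mathbb{Z}_2 = Dih(G)$ and $\Theta_1$ is inversion on the first coordinate. Parts (i)--(iv) of the corollary are then exactly parts (i)--(iv) of Theorem \ref{thm:StoD}. It therefore suffices to check the two hypotheses of that theorem.

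The first hypothesis is $\mathrm{Inv}(Y;G\times\mathbb{Z}_2)=\mathrm{Inv}(Y;Dih(G))$. Since $Y$ is symmetric in $G \times \mathbb{Z}_2$, we have $\mathrm{Inv}(Y;G\times \mathbb{Z}_2)=Y$. By Corollary \ref{cor:PecGen}(iii), $Y$ is also symmetric in $Dih(G)$, so $\mathrm{Inv}(Y;Dih(G))=Y$. Both inverse sets therefore equal $Z=Y$.

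The second hypothesis is $\Gamma_1(Z)=\Theta_1(Z)$. Here $\Gamma_1(Z)=Z=Y$. Also $\Theta_1(Y)=\{(-y_1,y_2):(y_1,y_2)\in Y\}=\mathrm{Inv}(Y;G\times\mathbb{Z}_2)=Y$, using symmetry in the direct product again; this is the observation already made in Remark \ref{rem:Y=theta1Y}. Both hypotheses hold, and parts (i) and (ii) follow.

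For (iii) and (iv) we set $X=Y$. Then the symmetry hypothesis applies to $X$ itself, and the same two checks go through. The only point needing care is that the lemma used for (ii), Lemma \ref{NearSEDF}, passes between $(A;B)$ and $\{A,\mathrm{Inv}(B)\}$. In the theorem this is handled through the equality of the difference multisets $D(X,Y;\cdot)$ in both groups, so the GSEDF condition, which concerns $D(X,Y)$ and $D(Y,X)$, needs no extra work. Still, I would double-check that $D(Y,X)$ also transfers. This requires the same hypotheses with the roles of $X$ and $Y$ swapped, which holds only if $X$ is also symmetric. In the abelian direct product, symmetry of one set of a near-factorization forces symmetry of the other, as noted from \cite{KreLiSti}. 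Alternatively, one can use the proof of Lemma \ref{NearSEDF}, which shows that $D(Y,X)$ is obtained by inverting $D(X,Y)$ elementwise, and inversion of $G\setminus\{e\}$ is a bijection in any group. I expect this bookkeeping about which of $D(X,Y)$ and $D(Y,X)$ is transferred to be the main (minor) obstacle. The hypothesis verification itself is routine.
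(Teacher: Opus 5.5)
Your proof is correct and is essentially the paper's argument: take $\Gamma$ trivial and $\Theta=\Phi$ in Theorem \ref{thm:StoD}, obtain $Z=Y$ from symmetry in $G\times\mathbb{Z}_2$ together with Corollary \ref{cor:PecGen}(iii), and note that $\Gamma_1(Y)=Y=\Theta_1(Y)$. Your worry about $D(Y,X)$ is settled by your second remark, which is what Lemma \ref{NearSEDF} encodes: in any group $D(Y,X)$ is the elementwise inverse of $D(X,Y)$, so the two-set GSEDF condition reduces to $D(X,Y)=\lambda(K\setminus\{e\})$, you never need $D(Y,X)$ itself to agree in the two groups, and the detour via symmetry of $X$ is unnecessary.
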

\begin{proof}
 Here $\Gamma$ is the trivial homomorphism and $\Theta$ is $\Phi$ as given in Example \ref{def:dihedral}.  By Corollary \ref{cor:PecGen}, since $Y$ is  symmetric in $G \times \mathbb{Z}_2$, $Y$ is also symmetric in $Dih(G)$.  So $\mathrm{Inv}(Y;G \rtimes_{\Gamma} \mathbb{Z}_2)=Y=\mathrm{Inv}(Y;G \rtimes_{\Theta} \mathbb{Z}_2)$, and so in the notation of Theorem \ref{thm:StoD}, we have that $Z=Y$. Again by the symmetric property, $\Gamma_1(Y)=\Theta_1(Y)$, so the first result holds.  The second result then follows.
\end{proof}

The fact that, in the particular case considered in Corollary \ref{cor:Dih_StoD}, the symmetric condition in the direct product simultaneously guarantees both the necessary near-factorization property and the necessary set-inverse property may explain why taking symmetric sets from $\mathbb{Z}_n \times \mathbb{Z}_2$ ($n$ odd)  to the dihedral group $D_{2n}$, is the only setting so far considered in the literature for external difference families \cite{KrePatSti}. 

The following example illustrates Corollary \ref{cor:Dih_StoD}; we note that the same difference sets appeared as Example 3.5 of \cite{Swa}.
\begin{example}
Consider the following set in the Cartesian product $\mathbb{Z}_4 \times \mathbb{Z}_2 \times \mathbb{Z}_2$:
\[X=\{(0,0,0),(0,0,1),(0,1,0),(1,0,0),(2,1,1),(3,0,0)\}.\]
Taking $G$ to be $\mathbb{Z}_4 \times \mathbb{Z}_2$, $X$ is a subset of $G \times \mathbb{Z}_2$ and $X$ is symmetric in the direct product $G \times \mathbb{Z}_2$.  So by Corollary \ref{cor:Dih_StoD}, $D(X;  G \times \mathbb{Z}_2)= D(X; Dih(G))$.
As noted in \cite{KreLiSti}, $X$ is a $(16,6,2)$ difference set in the direct product $\mathbb{Z}_4 \times \mathbb{Z}_2 \times \mathbb{Z}_2$.  So by Corollary \ref{cor:Dih_StoD}, $X$ is a $(16,6,2)$ difference set in $(\mathbb{Z}_4 \times \mathbb{Z}_2) \rtimes_\Phi \mathbb{Z}_2=Dih(\mathbb{Z}_4 \times \mathbb{Z}_2)$.
\end{example}

While being symmetric in $G \times \mathbb{Z}_2$ guarantees being symmetric in $Dih(G)$, the converse does not necessarily hold (see Remark \ref{rem:symm}).

The following example provides an infinite family of GSEDFs in both cyclic and dihedral groups, of the type described in \cite{KrePatSti}.  

\begin{example}\label{EDFPecExmp}
Consider a $((2k+1)^2+1,2,k,1)$-SEDF with sets:
\begin{itemize}
\item $\{0,1,\ldots,2k-1,2k\}$
\item $\{2k+1,4k+2,\ldots, 4k^2+2k,4k^2+4k+1\}$
\end{itemize}
in $\mathbb{Z}_{4k^2+4k+2} \cong \mathbb{Z}_{2k^2+2k+1}\times \mathbb{Z}_2$.
These arise from a standard SEDF construction given for example in \cite{PatSti}.
These can be expressed as sets in $\mathbb{Z}_{2k^2+2k+1}\times \mathbb{Z}_2$ by taking the labels modulo $2k^2+2k+1$ and $2$ respectively. (The labels will depend on the parity of $k$.  Here we present the case for odd $k$; the even case is similar).  The sets are
\begin{itemize}
\item $\{(0,0),(1,1),\ldots,(2k-1,1),(2k,0)\}$
\item $\{(2k+1,1),(4k+2,0),\ldots, (2k^2+k,1),(k,0),(3k+1,1),\ldots, (2k^2-1,0),(2k^2+2k,1)\}$.
\end{itemize}
Translating all sets by $k$ yields the equivalent two-set SEDF in $\mathbb{Z}_{2k^2+2k+1}\times \mathbb{Z}_2$:
\begin{itemize}
\item $\{(2k^2+k+1,0),(2k^2+k+2,1),\ldots,(k-1,1),(k,0)\}$
\item $\{(k+1,1),(3k+2,0),\ldots, (2k^2,1),(0,0),(2k+1,1),\ldots, (2k^2-k-1,0),(2k^2+k,1)\}$.
\end{itemize}
Both of these sets are symmetric in the direct product, so Corollary \ref{cor:Dih_StoD} applies to give a $(4k^2+4k+2,2,k,1)$-SEDF in $D_{2(2k^2+2k+1)}$.
\end{example}

\section{External difference families with more than $2$ sets}

Up to now, the external difference structures which we have ``transferred" from one semidirect product to another  have all consisted of two sets (arising from their connection with near-factorizations).  To our knowledge, this is the maximum number of sets which have been considered in this context to date.  Now, we present the first such results for external difference families with more than two sets.   This addresses open problem (7) in the Future Work section of \cite{Swa}.

We first set up the necessary results which allow us to transfer our external difference families between different semidirect products.  Since any set in the family may play the role of either $X$ or $Y$ in some $D(X,Y;G)$, we impose conditions on all sets in the family (previously these were only applied to the set $Y$).

\begin{theorem}\label{thm:SD_digraphEDF}
Let $G$ be an abelian group. Let $\Gamma, \Theta: \mathbb{Z}_2 \rightarrow AUT(G)$ be group homomorphisms and let $(G \rtimes_{\Gamma} \mathbb{Z}_2,*_{\Gamma})$ and $(G \rtimes_{\Theta} \mathbb{Z}_2,*_{\Theta})$ be the corresponding semidirect products. Denote by $(G \times \mathbb{Z}_2,*)$ the direct product group. 

Let $A_0,\ldots, A_{m-1}$ be subsets of the Cartesian product $G \times \mathbb{Z}_2$.  Suppose that, for each $0 \leq i \leq m-1$, we have: $\mathrm{Inv}(A_i;G \rtimes_{\Gamma} \mathbb{Z}_2)=\mathrm{Inv}(A_i;G \rtimes_{\Theta} \mathbb{Z}_2)$ (denote this set by $Z_i$) and that $\Gamma_1(Z_i)=\Theta_1(Z_i)$. 

Then $(A_0,A_1,\ldots, A_{m-1})$ is a $(n,m,l,\lambda; H)$-digraph defined EDF in $G\rtimes_{\Gamma} \mathbb{Z}_2$ precisely if it is a $(n,m,l,\lambda; H)$-digraph defined EDF in $G\rtimes_{\Theta} \mathbb{Z}_2$
\end{theorem}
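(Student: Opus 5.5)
The plan is to reduce the statement to Theorem \ref{thm:StoD}(i), applied one directed edge at a time. By Definition \ref{def:H-defined EDF}, being an $(n,m,l,\lambda;H)$-EDF in a group $K$ is a property of the sets $A_i$ (disjointness and common size $l$) together with the multiset
\[ \bigcup_{(i,j)\in \E(H)} D(A_j,A_i;K). \]
Disjointness and size are conditions on subsets of the underlying Cartesian product $G\times\mathbb{Z}_2$. They do not depend on which multiplication is used, so they hold in $G\rtimes_\Gamma\mathbb{Z}_2$ exactly when they hold in $G\rtimes_\Theta\mathbb{Z}_2$. Likewise $n=2|G|$ is the same for both groups, and both groups have identity $(e,0)$. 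It therefore suffices to show that the displayed multiset is the same for $K=G\rtimes_\Gamma\mathbb{Z}_2$ and $K=G\rtimes_\Theta\mathbb{Z}_2$.

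For this, fix an edge $(i,j)\in\E(H)$ and apply Theorem \ref{thm:StoD}(i) with $X=A_j$ and $Y=A_i$. The hypotheses of that theorem concern only $Y$: we need $\mathrm{Inv}(A_i;G\rtimes_\Gamma\mathbb{Z}_2)=\mathrm{Inv}(A_i;G\rtimes_\Theta\mathbb{Z}_2)=Z_i$ and $\Gamma_1(Z_i)=\Theta_1(Z_i)$. Both are assumed here for every index $i$. Since each index may appear as the tail of some edge, this is why the hypothesis is imposed on all the sets. The conclusion is
\[ D(A_j,A_i;G\rtimes_\Gamma\mathbb{Z}_2)=D(A_j,A_i;G\rtimes_\Theta\mathbb{Z}_2) \]
as multisets. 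To see that $Z_i$ is the relevant object, recall the argument behind Theorem \ref{thm:StoD}. By definition $D(A_j,A_i;K)=S(A_j,Z_i;K)$, and Theorem \ref{thm:genhom}(i) with $Y=Z_i$ equates the two $S$-multisets because $\Gamma_1(Z_i)=\Theta_1(Z_i)$.

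Taking the multiset union over all edges of $H$ then shows that the defining multiset is identical in the two semidirect products. So it equals $\lambda\bigl((G\times\mathbb{Z}_2)\setminus\{(e,0)\}\bigr)$ in one group exactly when it does in the other, which gives the equivalence in both directions.

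I do not expect a real obstacle here. The only points needing care are bookkeeping ones. First, the orientation convention in Definition \ref{def:H-defined EDF} uses $D(A_j,A_i)$, so the inverted set for edge $(i,j)$ is $A_i$. Because the hypothesis holds for every index, the orientation does not in fact matter. Second, one must note explicitly that the non-multiset conditions (disjointness, equal sizes, the value of $n$) do not depend on the group operation. Finally, if loops $(i,i)$ were permitted in $H$, one would use $D(A_i;K)$ in place of $D(A_i,A_i;K)$ and invoke Theorem \ref{thm:StoD}(iii) instead; the argument is otherwise unchanged.
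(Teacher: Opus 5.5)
Your proposal is correct and matches the paper's proof: the paper applies Theorem~\ref{thm:StoD}(i) to each pair of sets to get equality of the difference multisets in the two semidirect products, then takes the multiset union over $\E(H)$. Your explicit checks that disjointness, set sizes and the identity $(e,0)$ do not depend on the multiplication are bookkeeping that the paper leaves implicit.
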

\begin{proof}
By Theorem \ref{thm:StoD}, under the stated conditions,
$D(A_i,A_j,G\rtimes_{\Gamma}\mathbb{Z}_2)=D(A_i,A_j,G\rtimes_{\Theta}\mathbb{Z}_2)$ for all $0 \leq i \neq j \leq m-1$.
Hence:
\[
\bigcup_{(i,j) \in \E(H)}D(A_{j},A_i,G\rtimes_\Phi\mathbb{Z}_2)=\bigcup_{(i,j) \in \E(H)}D(A_{j},A_i,G\rtimes_{\Gamma} \mathbb{Z}_2)
\]
as required.
\end{proof}

Combining Theorem \ref{thm:SD_digraphEDF} with Corollary \ref{cor:Dih_StoD}, we have the following corollary.
\begin{corollary}\label{cor:symmDigDefEDF}
Let $G$ be an abelian group.  Let $A_0,\ldots, A_{m-1}$ be subsets of the Cartesian product $G \times \mathbb{Z}_2$, such that each $A_i$ ($0 \leq i \leq m-1$) is symmetric in the direct product $G \times \mathbb{Z}_2$. 
Then $(A_0,A_1,\ldots, A_{m-1})$ is a $(n,m,l,\lambda; H)$-digraph defined EDF in $G\times \mathbb{Z}_2$ precisely if it is a $(n,m,l,\lambda; H)$-digraph defined EDF in $Dih(G)$.
\end{corollary}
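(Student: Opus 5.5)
The plan is to apply Theorem \ref{thm:SD_digraphEDF} with $\Gamma$ the trivial homomorphism, so that $G \rtimes_{\Gamma} \mathbb{Z}_2$ is the direct product $G \times \mathbb{Z}_2$, and with $\Theta=\Phi$ as in Example \ref{def:dihedral}, so that $G \rtimes_{\Theta} \mathbb{Z}_2 = Dih(G)$ and $\Theta_1$ is inversion on the $G$-coordinate. It then suffices to check, for each $i$, the two hypotheses of that theorem: that the inverse sets of $A_i$ in the two groups coincide, and that $\Gamma_1(Z_i)=\Theta_1(Z_i)$ for this common inverse set $Z_i$.

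For the first hypothesis, note that $A_i$ is symmetric in $G \times \mathbb{Z}_2$ by assumption, so $\mathrm{Inv}(A_i; G \times \mathbb{Z}_2)=A_i$. By Corollary \ref{cor:PecGen}(iii), $A_i$ is also symmetric in $Dih(G)$, so $\mathrm{Inv}(A_i; Dih(G))=A_i$. Hence the two inverse sets agree and $Z_i=A_i$.

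For the second hypothesis, $\Gamma_1$ is the identity, so $\Gamma_1(Z_i)=A_i$. On the other side, $\Theta_1(Z_i)=\{(-x,y):(x,y)\in A_i\}=\mathrm{Inv}(A_i;G\times\mathbb{Z}_2)=A_i$, using symmetry in the direct product once more. So $\Gamma_1(Z_i)=\Theta_1(Z_i)$. This is exactly the reasoning in the proof of Corollary \ref{cor:Dih_StoD}, applied to every set rather than just one.

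Theorem \ref{thm:SD_digraphEDF} then gives the equivalence directly. Its proof only uses the equalities $D(A_i,A_j;\cdot)$ for the ordered pairs indexing the edges of $H$, and these follow from Theorem \ref{thm:StoD} because the hypotheses hold for every set. Disjointness and the set sizes are independent of the group structure on the underlying Cartesian product, so they transfer unchanged.

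There is no real obstacle here. The only point to be careful about is that Theorem \ref{thm:StoD} needs its hypotheses on the second argument of each $D(\cdot,\cdot)$. Since every $A_i$ is symmetric and can occur in either position, all the needed instances are covered.
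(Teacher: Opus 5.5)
Your proposal is correct and follows essentially the paper's route. The paper obtains this corollary by applying Theorem \ref{thm:SD_digraphEDF} with $\Gamma$ trivial and $\Theta=\Phi$, using the hypothesis check from Corollary \ref{cor:Dih_StoD}: symmetry in $G\times\mathbb{Z}_2$ gives $Z_i=A_i$ in both groups and $\Gamma_1(A_i)=A_i=\Theta_1(A_i)$, which is exactly what you verify for each $A_i$.
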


In \cite{KrePatSti}, the following notion of ``strongly symmetric" was introduced for subsets of the dihedral group $D_{2n}$.  We show that this is in fact a rephrasing (in the particular case when $G=\mathbb{Z}_n$) of the condition for a set in the Cartesian product $G \times \mathbb{Z}_2$ to be symmetric in the direct product group $G \times \mathbb{Z}_2$.
\begin{definition}
A subset $T$ of $D_{2n}=\langle r,s:r^n=s^2=e, srs=r^{-1} \rangle$ is \emph{strongly symmetric} if: for all $0 \leq i \leq 1$ and $0 \leq j \leq n-1$, $x=s^i r^j \in T$ if and only if $s^i r^{-j} \in T$.
\end{definition}
This condition may be equivalently expressed as: $r^j \in T \Leftrightarrow r^{-j} \in T$ and $r^j s \in T \Leftrightarrow r^{-j} s \in T$, i.e.  for all $0 \leq x \leq n-1$ and  $0 \leq y \leq 1$, $r^x s^y \in T$ if and only if $r^{-x} s^y \in T$.  \\
For a subset $T$ of the Cartesian product $\mathbb{Z}_n \times \mathbb{Z}_2$, define the subset $T' \subseteq D_{2n}$ by $T'=\{r^x s^y: (x,y) \in T\}$.  Recall from Lemma \ref{lem:dihedraldef} that the map sending $(x,y) \in \mathbb{Z}_n \rtimes_{\Phi} \mathbb{Z}_2$ to $r^x s^y \in D_{2n}$ is a group isomorphism.
\begin{theorem}\label{thm:sym}
Let $X$ be a subset of the Cartesian product $\mathbb{Z}_n \times \mathbb{Z}_2$.  Then $X$ is a symmetric subset of the direct product $\mathbb{Z}_n \times \mathbb{Z}_2$ if and only if $X'$ is a strongly symmetric subset of $D_{2n}$.
\end{theorem}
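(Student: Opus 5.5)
The argument is a direct unwinding of the two definitions through the bijection $(x,y)\mapsto r^x s^y$ of Lemma~\ref{lem:dihedraldef}. First I record what symmetry in the direct product means. In $\mathbb{Z}_n \times \mathbb{Z}_2$ the inverse of $(x,y)$ is $(-x,-y)=(-x,y)$, since $-y=y$ in $\mathbb{Z}_2$. So, as noted in Remark~\ref{rem:symm}, $X$ is symmetric precisely when
\[
(x,y)\in X \;\Longleftrightarrow\; (-x,y)\in X \qquad \text{for all } x\in\mathbb{Z}_n,\ y\in\mathbb{Z}_2 .
\]

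Next I use the reformulation of strong symmetry stated just after its definition: $T\subseteq D_{2n}$ is strongly symmetric if and only if $r^x s^y\in T \Leftrightarrow r^{-x}s^y\in T$ for all $0\le x\le n-1$ and $0\le y\le 1$. I should justify this briefly rather than just cite it. The case $i=0$ of the definition is literally the case $y=0$. For $i=1$, the relation $srs=r^{-1}$ gives $s r^j = r^{-j}s$. Hence the condition ``$sr^j\in T \Leftrightarrow sr^{-j}\in T$'' becomes ``$r^{-j}s\in T\Leftrightarrow r^{j}s\in T$''. As $j$ ranges over all of $\mathbb{Z}_n$, this is the same as the $y=1$ statement.

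Now apply this with $T=X'$. Because $(x,y)\mapsto r^x s^y$ is a bijection from $\mathbb{Z}_n\times\mathbb{Z}_2$ onto $D_{2n}$, we have $(x,y)\in X$ if and only if $r^x s^y\in X'$. Likewise $(-x,y)\in X$ if and only if $r^{-x}s^y\in X'$, where exponents are read mod $n$. So the displayed condition for $X$ translates term by term into the reformulated strong-symmetry condition for $X'$, and both implications follow at once.

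The only step needing care is the $i=1$ case, where the definition writes elements as $s^i r^j$ but $X'$ is described via $r^x s^y$. The conversion $sr^j=r^{-j}s$, together with $j$ ranging over the whole of $\mathbb{Z}_n$, resolves this. I do not expect any real obstacle.
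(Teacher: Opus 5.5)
Your proposal is correct and follows the paper's proof: both pass through the reformulation $r^x s^y \in T \Leftrightarrow r^{-x}s^y \in T$ and transfer membership along the bijection $(x,y)\mapsto r^x s^y$. Your check of the $i=1$ case via $sr^j=r^{-j}s$ also justifies the equivalence with the original $s^i r^j$ definition, which the paper only asserts.
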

\begin{proof}
Let $X$ be a symmetric subset of the direct product. For any $r^x s^y \in X'$, $(x,y) \in X$ and hence $(-x,y) \in X$; thus $r^{-x}s^y \in X'$, so $X'$ is strongly symmetric in $D_{2n}$.
Let $X'$ be strongly symmetric in $D_{2n}$, and consider $(x,y) \in X$. As $r^x s^y \in X'$ and $X'$ is strongly symmetric, we have $r^{-x} s^y\in X'$, hence $(-x,y)\in X$, so $X$ is symmetric in $\mathbb{Z}_n \times \mathbb{Z}_2$.
\end{proof}
Combining Corollary \ref{cor:symmDigDefEDF} and Theorem \ref{thm:sym} yields:
\begin{corollary}\label{cor:stronglysymmetricEDF}
$(A_0,A_1,\ldots, A_{m-1})$ is a $(2n,m,l,\lambda; H)$-digraph defined EDF in the direct product $\mathbb{Z}_n \times \mathbb{Z}_2$ with all sets symmetric if and only if $(A_0',A_1',\ldots, A_{m-1}')$ is a $(2n,m,l,\lambda; H)$-digraph defined EDF in $D_{2n}$ with all sets strongly symmetric.
\end{corollary}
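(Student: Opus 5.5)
The plan is to combine Corollary \ref{cor:symmDigDefEDF} (with $G=\mathbb{Z}_n$) with Theorem \ref{thm:sym}, transporting everything through the isomorphism $\psi:\mathbb{Z}_n \rtimes_\Phi \mathbb{Z}_2 \to D_{2n}$, $(x,y)\mapsto r^x s^y$, of Lemma \ref{lem:dihedraldef}. Here $A_i'=\psi(A_i)$, and $\mathbb{Z}_n\rtimes_\Phi\mathbb{Z}_2=Dih(\mathbb{Z}_n)$.

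For the forward direction, suppose $(A_0,\ldots,A_{m-1})$ is a $(2n,m,l,\lambda;H)$-EDF in $\mathbb{Z}_n\times\mathbb{Z}_2$ with every $A_i$ symmetric there. Corollary \ref{cor:symmDigDefEDF} shows it is an $H$-defined EDF with the same parameters in $Dih(\mathbb{Z}_n)$. Next we check that a group isomorphism $\psi$ preserves digraph-defined EDFs. It is a bijection, so disjointness and set sizes are preserved. It also commutes with products and inverses, so $\psi(\mathrm{Inv}(A_i;Dih(\mathbb{Z}_n)))=\mathrm{Inv}(A_i';D_{2n})$. Hence $\psi$ maps the multiset $D(A_j,A_i;Dih(\mathbb{Z}_n))$ onto $D(A_j',A_i';D_{2n})$, and it maps $\lambda(Dih(\mathbb{Z}_n)\setminus\{e\})$ onto $\lambda(D_{2n}\setminus\{e\})$. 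Taking the union over $\E(H)$ shows $(A_0',\ldots,A_{m-1}')$ is a $(2n,m,l,\lambda;H)$-EDF in $D_{2n}$. Finally, Theorem \ref{thm:sym} gives that each $A_i'$ is strongly symmetric.

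For the converse, start from an EDF $(A_0',\ldots,A_{m-1}')$ in $D_{2n}$ with all sets strongly symmetric. Each $A_i'$ has the form $\psi(A_i)$ for a unique $A_i\subseteq \mathbb{Z}_n\times\mathbb{Z}_2$, and Theorem \ref{thm:sym} makes each $A_i$ symmetric in the direct product. Applying $\psi^{-1}$, by the same isomorphism argument, gives an $H$-defined EDF in $Dih(\mathbb{Z}_n)$. The reverse direction of Corollary \ref{cor:symmDigDefEDF}, which applies because the symmetry hypothesis holds, then gives an EDF in $\mathbb{Z}_n\times\mathbb{Z}_2$ with all sets symmetric.

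There is no real obstacle; the only point needing care is that isomorphisms preserve the difference multisets. That holds because $\psi(x*_\Phi y^{-1})=\psi(x)\psi(y)^{-1}$ and $\psi$ is a bijection, so multiplicities are preserved. We also rely on $n>2$, as Lemma \ref{lem:dihedraldef} requires. For $n\le 2$ the group $Dih(\mathbb{Z}_n)$ is the direct product, so the statement is immediate provided $D_{2n}$ is read with the same presentation.
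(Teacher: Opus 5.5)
Your proof is correct and is essentially the paper's argument: the paper obtains this corollary simply by combining Corollary \ref{cor:symmDigDefEDF} (with $G=\mathbb{Z}_n$) and Theorem \ref{thm:sym}, leaving implicit the transport along the isomorphism $(x,y)\mapsto r^x s^y$ of Lemma \ref{lem:dihedraldef}. Your explicit check that this isomorphism carries each $D(A_j,A_i;Dih(\mathbb{Z}_n))$ onto $D(A_j',A_i';D_{2n})$, and your remark on the degenerate cases $n\le 2$, fill in details the paper does not state.
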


 \subsection{New CEDFs in abelian and dihedral groups}

In this section, we construct a new infinite family of CEDFs in non-cyclic abelian groups, and apply our transfer approach to obtain an infinite family of CEDFs in non-abelian groups (we believe this is the first such non-abelian family in the literature).

Constructions for CEDFs have primarily focussed on cyclic groups or the additive groups of finite fields (see for example \cite{BurMerTra, HucJefMcC, PatSti, StiVei}).
In \cite{HucJefMcC} a construction is presented for an $(3l^2+1,3,l,1)$-CEDF in the non-cyclic abelian group $\mathbb{Z}_{\frac{3l^2+1}{2}}\times\mathbb{Z}_2$, where $l\equiv 3 \mod 4$ (we note that the sets given in this construction also yield a CEDF for $l\equiv 1 \mod 4$, although the proof requires some non-trivial adaptation for this case, due to the change in parity of $(l-1)/2$).  Unfortunately, the sets involved are not symmetric in the direct product, nor can they easily be transformed to an equivalent symmetric version, and so Corollary \ref{cor:symmDigDefEDF} cannot be applied to obtain a corresponding CEDF in the dihedral group.

The following result is a new construction for an infinite family of abelian three-set CEDFs in $\mathbb{Z}_{\frac{3l^2+1}{2}}\times\mathbb{Z}_2$, which holds for any odd $l=2k+1$ and has the property that each set is symmetric.  The proof (which does not require any case-split for the $l\equiv 3 \mod 4$ and $l\equiv 1 \mod 4$ cases) follows the same general proof approach as the aforementioned construction in \cite{HucJefMcC}; we provide a self-contained proof for completeness.  Alongside the proof, the reader may find it helpful to consult the subtraction tables which illustrate the $k=2$ case, presented in Example \ref{ex:Z38Z2}.

\begin{theorem}\label{thm:noncyclic}
Let $k \in \mathbb{N}$. Define the following subsets of $\mathbb{Z}_{\frac{3(2k+1)^2+1}{2}}\times\mathbb{Z}_2$:
\begin{itemize}
\item $C_0=\bigcup_{i=0}^{2k}\{(-k+i,0)\}$
\item $C_1=\bigcup_{i=0}^{2k}\{(k+1+(3k+2)i,i)\}$
\item $C_2=\bigcup_{i=0}^{2k}\{(2k+1+(3k+1)i,i+1)\}$
\end{itemize}
where the first component is taken modulo $\frac{3(2k+1)^2+1}{2}$ and the second component is taken modulo $2$.\\
Then $(C_0,C_1,C_2)$ form a $(3(2k+1)^2+1,3,2k+1,1)$-CEDF in the non-cyclic abelian group $\mathbb{Z}_{\frac{3(2k+1)^2+1}{2}}\times\mathbb{Z}_2$ and each $C_i$ ($0 \leq i \leq 2$) is a symmetric set in this group.
\end{theorem}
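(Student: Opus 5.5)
The plan is to verify the two claims separately: first symmetry of each $C_i$, then the CEDF property, which we take to be a $1$-CEDF. Throughout write $l=2k+1$ and $n=\frac{3l^2+1}{2}=6k^2+6k+2$. Note that $n$ is even, so $\mathbb{Z}_n\times\mathbb{Z}_2$ is indeed non-cyclic.

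\textbf{Symmetry.} By Remark \ref{rem:symm}, in the direct product we need $(x,y)\in C_i \Rightarrow (-x,y)\in C_i$.
\begin{itemize}
\item For $C_0$ this is immediate, since its first coordinates form the interval $[-k,k]$ and all second coordinates are $0$.
\item For $C_1$, pair the element with index $i$ with the one with index $2k-i$. The second coordinates $i$ and $2k-i$ have the same parity. The first coordinates sum to $2(k+1)+(3k+2)2k=6k^2+6k+2=n\equiv 0$.
\item For $C_2$, the same pairing gives second coordinates $i+1$ and $2k-i+1$ of equal parity. The first coordinates sum to $2(2k+1)+(3k+1)2k=n\equiv 0$.
\end{itemize}
So each $C_i$ is symmetric. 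A side remark: combining this with Corollary \ref{cor:symmDigDefEDF} later yields the dihedral family.

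\textbf{CEDF property.} Each $C_i$ has size $l$, and the three multisets $D(C_1,C_0)$, $D(C_2,C_1)$, $D(C_0,C_2)$ together contain $3l^2=2n-1$ elements. It therefore suffices to show that every non-identity element of $\mathbb{Z}_n\times\mathbb{Z}_2$ occurs at least once; equivalently, that the three multisets are pairwise disjoint and have no internal repeats. I will write each difference explicitly.
\begin{itemize}
\item $D(C_1,C_0)$ consists of $\bigl(2k+1+(3k+2)i-j,\; i\bigr)$ with $0\le i,j\le 2k$. For fixed $i$ this is the interval $[1+(3k+2)i,\,2k+1+(3k+2)i]$ with parity $i$. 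Consecutive intervals are separated by gaps of length $k+1$.
\item $D(C_0,C_2)$ consists of $\bigl(-(3k+1)-(3k+1)i+j,\; i+1\bigr)$. For fixed $i$ this is again an interval of length $2k+1$, with parity $i+1$.
\item $D(C_2,C_1)$ consists of $\bigl(k+(3k+1)i-(3k+2)j,\; i+1-j\bigr)$. Setting $t=i-j\in[-2k,2k]$ rewrites the first coordinate as $k+(3k+1)t-j$. So for fixed $t$ we again get a run of consecutive integers, of length $2k+1-|t|$, with parity $t+1$.
\end{itemize}

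The heart of the proof is a bookkeeping argument. Lift all first coordinates to integers in a fixed window of length $n$, for instance centred at $0$ using the symmetry. Then show that, for each parity class $y\in\{0,1\}$, the runs of parity $y$ tile $\{0,\dots,n-1\}$, omitting only $0$ when $y=0$. Concretely, I would order the blocks by their starting points and check three things:
\begin{enumerate}
\item Within $D(C_1,C_0)$, and likewise within $D(C_0,C_2)$, the intervals with $i\equiv y$ are spaced $2(3k+2)$ or $2(3k+1)$ apart.
\item The gaps these leave are filled exactly by the $D(C_2,C_1)$ runs of matching parity $t\equiv y+1$ and suitable length $2k+1-|t|$.
\item The total counts match.
\end{enumerate}
The reduction modulo $n$ must be handled by comparing against $n=(3k+2)(2k)+2k+2$ and $n=(3k+1)2k+4k+2$, which are exactly the identities used in the symmetry step. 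Symmetry lets me check only half of the range $x\in[0,n/2]$ and reflect.

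I expect the main obstacle to be the $D(C_2,C_1)$ term. Its runs have varying lengths, and the runs with $t<0$ wrap around modulo $n$, so the interleaving with the other two families must be checked case by case according to the sign of $t$ and the parity of $t$. I would first verify the pattern on the $k=2$ tables of Example \ref{ex:Z38Z2}, and then write the general interleaving as an explicit list of consecutive blocks.
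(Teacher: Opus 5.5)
Your route is the paper's route. You use the same three run decompositions (your formulas agree with the paper's, whose $s$ is your $t+1$), the same reduction from ``exactly once'' to ``at least once'' via $3l^2=2n-1$, and the same symmetry argument (your pairing $i\leftrightarrow 2k-i$ is the paper's reindexing $j=i-k$). The symmetry half is complete.

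The CEDF half, however, stops exactly where the paper's proof does its work. Items 1--3 only describe what a tiling would have to look like. Item 2 \emph{is} the theorem, and you never say which run of $D(C_2,C_1)$ fills which gap, or check a single endpoint. So nothing yet rules out an overlap or a hole.

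Two smaller points. First, your ``equivalently'' must also require that $(0,0)$ does not occur; this is also what gives the disjointness of $C_0,C_1,C_2$ that the definition requires. Second, the halving by symmetry is valid, because $-D(A,B)=D(A,B)$ whenever $A=-A$ and $B=-B$, but this has to be said, and the halving is not needed.

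What is missing is the explicit interleaving. Put $n=6k^2+6k+2$. Add $n$ to the $D(C_0,C_2)$ runs and set $m=2k-i$, so they become $[(3k+1)m+k+1,(3k+1)(m+1)]\times\{m+1\}$. Also add $n$ to the $D(C_2,C_1)$ runs with $t<0$. Then, for $0\le u\le k-1$:

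\begin{itemize}
\item \textbf{Parity $0$.} The interval $[2(3k+2)u+1,\,2(3k+2)(u+1)]$ is the concatenation, in this order, of the $D(C_1,C_0)$ run $i=2u$, the $D(C_2,C_1)$ run $t=2u+1$ (length $2k-2u$), the $D(C_0,C_2)$ run $m=2u+1$, and the $D(C_2,C_1)$ run $t=2u+1-2k$ (length $2u+2$). The run $i=2k$ then supplies $[6k^2+4k+1,n-1]$.
\item \textbf{Parity $1$.} The interval $[2(3k+1)u+k+1,\,2(3k+1)(u+1)+k]$ is the concatenation of the $D(C_0,C_2)$ run $m=2u$, the $D(C_2,C_1)$ run $t=2u-2k$, the $D(C_1,C_0)$ run $i=2u+1$, and the $D(C_2,C_1)$ run $t=2u+2$. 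The run $m=2k$ supplies $[6k^2+3k+1,6k^2+5k+1]$, and the run $t=0$ supplies $[-k,k]$, wrapping through $0$.
\end{itemize}

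Every $t\in[-2k,2k]$ is used exactly once. This settles the obstacle you anticipated: positive-$t$ runs are used with $|t|$ increasing, and negative-$t$ runs with $|t|$ decreasing. That reversal is why the paper reverses its block index for the runs with $s\le 0$. Each junction is a one-line check using the two expressions for $n$ you already wrote down.
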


\begin{proof}
Note $(3(2k+1)^2+1)/2=6k^2+6k+2$ is even and hence $G=\mathbb{Z}_{\frac{3(2k+1)^2+1}{2}}\times\mathbb{Z}_2$ is not a cyclic group.  
Throughout this proof, for simplicity, we will denote $D(X,Y; G)$ by $D(X,Y)$. We will show that $D(C_1,C_0) \cup D(C_2,C_1) \cup D(C_0,C_2)=G \setminus \{0\}$; note that disjointness of the three sets then follows from the fact that $0$ does not occur in the multiset of differences.  Since the difference multiset contains $3(2k+1)^2$ elements by construction, it will suffice to show that each non-identity group element occurs at least once.\\
For $a, b \in \mathbb{Z}_{\frac{3l^2+1}{2}}$ and $x \in \mathbb{Z}_2$, we will use the interval notation: we denote the set $\{(a,x), (a+1,x), \ldots, (b,x)\}$ by $[a,b]\times \{x\}$. \\
First, we determine the difference multisets. 
\begin{align*}
D  (C_1,C_0) &= \bigcup_{i=0}^{2k}\bigcup_{j=0}^{2k}\{(k+1+(3k+2)i-(-k+j),i)\}\\
&= \bigcup_{i=0}^{2k}\bigcup_{j=0}^{2k}\{((3k+2)i+2k+1-j,i)\}\\
&= \bigcup_{i=0}^{2k}[(3k+2)i+1,(3k+2)i+2k+1]\times \{i\}.\\
\end{align*}
We may view these differences as consisting of $2k$ length-$2k$ ``runs" of consecutive elements in the first coordinate, which occur horizontally in the difference table, indexed by $i$. Each run has fixed second coordinate, and the parity of the second coordinate alternates as $i$ takes values from $0$ to $2k$.

Next we have:
\begin{align*}
D(C_2,C_1) &= \bigcup_{i=0}^{2k}\bigcup_{j=0}^{2k}\{(2k+1+(3k+1)i-(k+1+(3k+2)j),i-j+1)\}\\
&= \bigcup_{i=0}^{2k}\bigcup_{j=0}^{2k}\{(3k+1)(i-j+1)-2k-1-j,i-j+1)\}.
\end{align*}
We let $s = i-j+1$: since $i$ and $j$ vary from $0$ to $2k$, $s$ takes values from  $-2k+1$ to $2k+1$.  We change the indices from $i$ and $j$ to $s$ and $j$.  For $-2k+1 \leq s \leq 2k+1$, let  $$J_s=\{j: 0 \leq j \leq 2k \mbox{ and } 0 \leq s+j-1 \leq 2k\}=[0,2k] \cap [1-s,2k+1-s]$$
so that
$$D  (C_2,C_1) = \bigcup_{s=-2k+1}^{2k+1}\bigcup_{j \in J_s} \{((3k+1)s-j-2k-1,s)\}.$$
For $-2k+1 \leq s \leq 0$, we have $J_s=[1-s,2k]$, while for $1 \leq s \leq 2k+1$ we have $J_s=[0,2k+1-s].$  Thus we have:
\[ D  (C_2,C_1) = \bigcup_{s=-2k+1}^{0}
\bigcup_{j = 1-s}^{2k} \{((3k+1)s-j-2k-1,s)\}
\cup\bigcup_{s=1}^{2k+1}\bigcup_{j =0}^{2k+1-s} 
\{((3k+1)s-j-2k-1,s)\}\]
\[= \bigcup_{s=-2k+1}^{0}
[(3k+1)s-4k-1,(3k+2)s-2k-2]\times \{s\}\]\[\cup\bigcup_{s=1}^{2k+1}
[(3k+2)s-4k-2,(3k+1)s-2k-1]\times \{s\}.\]
We may view these differences as ``runs" of consecutive elements (in the first coordinate) occurring diagonally in the difference table The set $J_s$ allows for the adjustment of the length of the differences depending on which diagonal we are considering. Again, we have alternation between $0$ and $1$ in the second coordinate for each run.

Finally:
\begin{align*}
D(C_0,C_2) &=\bigcup_{i=0}^{2k}\bigcup_{j=0}^{2k}\{(-k+i-(2k+1+(3k+1)j),j+1)\}\\
&=\bigcup_{j=0}^{2k}[-(3k+1)j-3k-1,-(3k+1)j-k-1]\times\{j+1\}.
\end{align*}
These differences we may view as ``runs" of consecutive elements (in the first coordinate) occurring vertically in the difference table, indexed by $j+1$, with alternation in the second coordinate. Re-indexing to replace $j$ by $2k-j$, and using the fact that $6k^2+6k+2 \equiv 0 \mod 3(2k + 1)^2 + 1)/2$ and $j \equiv (2k-j) \mod 2$, we have that
\begin{align*}
D(C_0,C_2)=\bigcup_{j=0}^{2k}[(3k+1)j+k+1,(3k+1)(j+1)]\times\{j+1\}.
\end{align*}

Having obtained expressions for each $D(C_{i+1 \mod 3},C_{i})$ ($i \in \{0,1,2\}$), we next check that the difference multiset comprises one occurrence of each non-identity element in $\mathbb{Z}_{\frac{3(2k+1)^2+1}{2}} \times \{0\}$.  We consider the differences corresponding to the following indices, where $ 0 \leq t \leq k-1$.  
\begin{itemize}
\item $i=2t$ in $D(C_1,C_0)$: the difference multiset obtained is
\begin{align*}
&= \bigcup_{t=0}^{k-1}[(3k+2)(2t)+1,(3k+2)(2t)+2k+1]\times \{0\}\\
\end{align*}
\item $s=2(t+1)$ in $D(C_2,C_1)$: the difference multiset obtained is
\begin{flalign*}
&=\bigcup_{t=0}^{k-1}
[(3k+2)(2(t+1))-4k-2,(3k+1)(2(t+1))-2k-1]\times \{0\}\\
&=\bigcup_{t=0}^{k-1} 
[(3k+2)(2t)+2k+2,(3k+1)(2t+1)+k]\times \{0\}\\
\end{flalign*}
\item $j =2t+1$ in $D(C_0,C_2)$: the difference multiset obtained is
\begin{align*}
&=\bigcup_{t=0}^{k-1}[(3k+1)(2t+1)+k+1,(3k+1)(2(t+1))]\times\{0\}
\end{align*}
\item $s=-2t$ in $D(C_2,C_1)$ (here $-2k+1 \leq s \leq 0$):
the difference multiset obtained is
\begin{flalign*}
&= \bigcup_{t=0}^{k-1}
[(3k+1)(-2t)-4k-1,(3k+2)(-2t)-2k-2]\times \{0\}\\
&= \bigcup_{t=0}^{k-1}
[(3k+1)(-2(k-1-t))-4k-1+6k^2+6k+2,\\
&(3k+2)(-2(k-1-t))-2k-2+6k^2+6k+2]\times \{0\}\\
&= \bigcup_{t=0}^{k-1}
[(3k+1)(2(t+1))+1,(3k+2)(2(t+1)))]\times \{0\}\\
\end{flalign*}
Here we have reindexed to replace $t$ by $k-1-t$ and used the fact that $6k^2+6k+2 \equiv 0 \mod \frac{3(2k+1)^2+1}{2}$.
\end{itemize}
The union of these is $[(3k+2)2t+1,(3k+2)2(t+1)]\times\{0\}$.
As $t$ ranges from $ 0 \text{ to } k-1$ we obtain one copy of:
\[[1,6k^2+4k]\times\{0\}\]
Finally, we take $i=2k$ in $D(C_1,C_0)$.  This contributes the set of differences 
$$[6k^2+4k+1,6k^2+6k+1]\times\{0\}.$$

Now we consider the elements of $\mathbb{Z}_{\frac{3(2k+1)^2+1}{2}}\times\{1\}$. Again we consider the differences corresponding to the following indices, where $ 0 \leq t \leq k-1$.  
\begin{itemize}
\item $j=2t$ in $D(C_0,C_2)$: the difference multiset obtained is
\begin{align*}
&=\bigcup_{t=0}^{k-1}[(3k+1)(2t)+k+1,(3k+1)(2t+1)]\times\{1\}
\end{align*}
\item $s=-2(t+1)+1$ in $D(C_2,C_1)$ (here $-2k+1 \leq s \leq 0$): the difference multiset obtained is
\begin{flalign*}
&= \bigcup_{t=0}^{k-1}
[(3k+1)(-2(t+1)+1)-4k-1,(3k+2)(-2(t+1)+1)-2k-2]\times \{1\}\\
&= \bigcup_{t=0}^{k-1}
[(3k+1)(-2(k-1-t+1)+1)-4k-1+6k^2+6k+2,\\
&(3k+2)(-2(k-1-t+1)+1)-2k-2+6k^2+6k+2]\times \{1\}\\
&= \bigcup_{t=0}^{k-1}
[(3k+1)(2t+1)+1,(3k+2)(2t+1)]\times \{1\}\\
\end{flalign*}
Here we have reindexed to replace $t$ by $k-1-t$ and used the fact that $6k^2+6k+2 \equiv 0 \mod \frac{3(2k+1)^2+1}{2}$.
\item $i =2t+1$ in $D(C_1,C_0)$: the difference multiset obtained is
\begin{align*}
&=\bigcup_{t=0}^{k-1}[(3k+2)(2t+1)+1,(3k+2)(2t+1)+2k+1]\times\{1\}
\end{align*}
\item $s=2(t+1)+1$ in $D(C_2,C_1)$: the difference multiset obtained is 
\begin{flalign*}
&=\bigcup_{t=0}^{k-1}
[(3k+2)(2(t+1)+1)-4k-2,(3k+1)(2(t+1)+1)-2k-1]\times \{1\}\\
&=\bigcup_{t=0}^{k-1} 
[(3k+2)(2t+1)+2k+2,(3k+1)(2(t+1))+k]\times \{1\}\\
\end{flalign*}
\end{itemize}
The union of these is $[(3k+2)2t+k+1,(3k+1)2(t+1)+k]\times\{1\}$.
As $t$ ranges from $ 0 \text{ to } k-1$ we obtain one copy of:
\[[k+1,6k^2+3k]\times\{1\}\]

Finally, we take $s=1$ in $D(C_2,C_1)$ and $j=2k$ in $D(C_0,C_2)$.  These contribute the sets of differences: 
$$([6k^2+5k+2,6k^2+6k+1]\cup[0,k])\times\{1\}.$$

and
$$[6k^2+3k+1,6k^2+5k+1]\times\{1\},$$

We have shown that $D(C_1,C_0) \cup D(C_2,C_1) \cup D(C_0,C_2)$ comprises one copy of each non-identity element in $\mathbb{Z}_{\frac{3(2k+1)^2+1}{2}}\times\mathbb{Z}_2$, and so $(C_0,C_1,C_2)$ form a $(3(2k+1)^2+1,3,2k+1,1)$-CEDF in $\mathbb{Z}_{\frac{3(2k+1)^2+1}{2}}\times\mathbb{Z}_2$.

To show that each set is symmetric, we re-index. Letting $j =i-k$, we have:
\begin{itemize}
\item $C_0=\bigcup_{j=-k}^{k}\{(j,0)\}$
\item $C_1=\bigcup_{j=-k}^{k}\{(3k^2+3k+1+(3k+2)j,j+k)\}$
\item $C_2=\bigcup_{j=-k}^{k}\{(3k^2+3k+1+(3k+2)j,j+k+1)\}$
\end{itemize}
Let $C_{0,j}=\{(j,0)\}$, $C_{1,j}=\{(3k^2+3k+1+(3k+2)j,j+k)\}$, $C_{2,j}=\{(3k^2+3k+1+(3k+2)j,j+k+1)\}$.  With this notation, the sets may be re-written as:
\begin{itemize}
\item $C_0= C_{0,0} \cup \bigcup_{j=1}^{k}\{C_{0,j},C_{0,-j}\}$
\item $C_1= C_{1,0} \cup \bigcup_{j=1}^{k}\{C_{1,j},C_{1,-j}\}$
\item $C_2= C_{2,0} \cup \bigcup_{j=1}^{k}\{C_{2,j},C_{2,-j}\}$
\end{itemize}
It can be verified that $C_{i,-j}=-C_{i,j}$ for all $i \in \{0,1,2\}$ and all $0 \leq j \leq k$. We demonstrate this for $i=1$; the other cases follow similarly. 
\begin{align*}
C_{1,-j}&=\{(3k^2+3k+1+(3k+2)(-j),-j+k)\}\\
    &=\{(3k^2+3k+1-(3k+2)j-(6k^2+6k+2),-j-k)\}\\
    &=\{-(3k^2+3k+1+(3k+2)j,j+k)\} \\
    &= -C_{1,j}
\end{align*}
Hence $C_{i}=-C_i$ for $i \in \{0,1,2\}$ and so the sets are symmetric.
\end{proof}

\begin{example}\label{ex:Z38Z2}
Let $k=2$.  Theorem $\ref{thm:noncyclic}$ yields the following sets in $\mathbb{Z}_{38}\times\mathbb{Z}_2$:
\[C_0=\{(0,2),(0,1),(0,0),(0,37),(0,36)\}\]
\[C_1=\{(0,3),(1,11),(0,19),(1,27),(0,35)\}\]
\[C_2=\{(1,5),(0,12),(1,19),(0,26),(1,33)\}.\]
These form a $(76,3,5,1)$-CEDF in $\mathbb{Z}_{38}\times\mathbb{Z}_2$.  The subtraction tables for $D(C_1,C_0)$, $D(C_2,C_1)$ and $D(C_0,C_2)$ are given in Tables \ref{table:C_1C_0}, \ref{table:C_2C_1} and \ref{table:C_0C_2} respectively.
\begin{table}[h]
\renewcommand{\arraystretch}{1.2}
\centering
\setlength\tabcolsep{3pt}
\begin{tabular}{|c|ccccc|} 
\hline
$-$ & $(2,0)$ & $(1,0)$ & $(0,0)$ & $(37,0)$ & $(36,0)$ \\ \hline
$(3,0)$ & $ (1,0)$	& $(2,0)$ & $(3,0)$ & $(4,0)$	& $(5,0)$ \\
$(11,1)$ & $ \textbf{(9,1)}$ & $	\textbf{(10,1)}$ & $	\textbf{(11,1)}	$ & $\textbf{(12,1)}	$ & $\textbf{(13,1)}$ \\
$(19,0)$ & $ (17,0)$	& $(18,0)$ & $(19,0)$ & $(20,0)$	& $(21,0)$ \\
$(27,1)$ & $ \textbf{(25,1)}$ & $	\textbf{(26,1)}$ & $	\textbf{(27,1)}	$ & $\textbf{(28,1)}	$ & $\textbf{(29,1)}$ \\
$(35,0)$ & $ (33,0)$	& $(34,0)$ & $(35,0)$ & $(36,0)$	& $(37,0)$ \\
\hline
\end{tabular}
	
\caption{$D(C_1,C_0)$ for the construction in $\mathbb{Z}_{38} \times \mathbb{Z}_2$ from Example \ref{ex:Z38Z2}}
\label{table:C_1C_0}
\end{table}

\begin{table}[h]
\renewcommand{\arraystretch}{1.2}
\centering
\setlength\tabcolsep{3pt}
\begin{tabular}{|c|ccccc|} 
\hline
$-$ & $(3,0)$ & $(11,1)$ & $(19,0)$ & $(27,0)$ & $(35,0)$ \\ \hline
$(5,1)$ & $ \textbf{(2,1)}$	& $(32,0)$ & $\textbf{(24,1)}$ & $(16,0)$	& $\textbf{(8,1)}$ \\
$(12,0)$ & $ (9,0)$ & $	\textbf{(1,1)}$ & $	(31,0)	$ & $\textbf{(23,1)}	$ & $(15,0)$ \\
$(19,1)$ & $ \textbf{(16,1)}$	& $(8,0)$ & $\textbf{(0,1)}$ & $(30,0)$	& $\textbf{(22,1)}$ \\
$(26,0)$ & $ (23,0)$ & $	\textbf{(15,1)}$ & $ (7,0)	$ & $\textbf{(37,1)}	$ & $(29,0)$ \\
$(33,1)$ & $ \textbf{(30,1)}$	& $(22,0)$ & $\textbf{(14,1)}$ & $(6,0)$	& $\textbf{(36,1)}$ \\
\hline
\end{tabular}
	
\caption{$D(C_2,C_1)$ for the construction in $\mathbb{Z}_{38} \times \mathbb{Z}_2$ from Example \ref{ex:Z38Z2}}
\label{table:C_2C_1}
\end{table}

\begin{table}[h]
\renewcommand{\arraystretch}{1.2}
\centering
\setlength\tabcolsep{3pt}
\begin{tabular}{|c|ccccc|} 
\hline
$-$ & $(5,1)$ & $(12,0)$ & $(19,1)$ & $(26,0)$ & $(33,0)$ \\ \hline
$(2,0)$ & $ \textbf{(35,1)}$	& $(28,0)$ & $\textbf{(21,1)}$ & $(14,0)$	& $\textbf{(7,1)}$ \\
$(1,0)$ & $ \textbf{(34,1)}$ & $(27,0)$ & $	\textbf{(20,1)}	$ & $(13,0)	$ & $\textbf{(6,1)}$ \\
$(0,0)$ & $ \textbf{(33,1)}$	& $(26,0)$ & $\textbf{(19,1)}$ & $(12,0)$	& $\textbf{(5,1)}$ \\
$(37,0)$ & $ \textbf{(32,1)}$ & $(25,0)$ & $	\textbf{(18,1)}	$ & $(11,0)	$ & $\textbf{(4,1)}$ \\
$(36,0)$ & $ \textbf{(31,1)}$	& $(24,0)$ & $\textbf{(17,1)}$ & $(10,0)$	& $\textbf{(3,1)}$ \\
\hline
\end{tabular}
	
\caption{$D(C_0,C_2)$ for the construction in $\mathbb{Z}_{38} \times \mathbb{Z}_2$ from Example \ref{ex:Z38Z2}}
\label{table:C_0C_2}
\end{table}
\end{example}

Finally, applying Corollary \ref {cor:symmDigDefEDF} (specifically, its Corollary \ref{cor:stronglysymmetricEDF} form) to Theorem \ref{thm:noncyclic} yields the first infinite family of non-abelian CEDFs with more than two sets.
\begin{theorem}\label{thm:dihedral}
Let $k \in \mathbb{N}$. Define the following subsets of $D_{3(2k+1)^2+1}$:
\begin{itemize}
\item $C_0=\bigcup_{i=0}^{2k}\{r^{-k+i}\}$
\item $C_1=\bigcup_{i=0}^{2k}\{r^{k+1+(3k+2)i}s^i\}$
\item $C_2=\bigcup_{i=0}^{2k}\{r^{2k+1+(3k+1)i}s^{i+1}\}$
\end{itemize}
where the orders of $r$ and $s$ are $\frac{3(2k+1)^2+1}{2}$ and $2$ respectively.\\
Then $(C_0,C_1,C_2)$ form a $(3(2k+1)^2+1,3,2k+1,1)$-CEDF in the dihedral group $D_{3(2k+1)^2+1}$.
\end{theorem}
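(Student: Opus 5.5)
The proof is a direct transfer: combine Theorem \ref{thm:noncyclic} with Corollary \ref{cor:stronglysymmetricEDF}, using the isomorphism of Lemma \ref{lem:dihedraldef}. Set $n=\frac{3(2k+1)^2+1}{2}=6k^2+6k+2$. Since $n>2$, Lemma \ref{lem:dihedraldef} shows that $(x,y)\mapsto r^xs^y$ is an isomorphism from $\mathbb{Z}_n\rtimes_\Phi\mathbb{Z}_2=Dih(\mathbb{Z}_n)$ onto $D_{2n}=D_{3(2k+1)^2+1}$, where $r$ has order $n$ and $s$ has order $2$.

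First I check that the sets in the statement are exactly the images $C_i'=\{r^xs^y:(x,y)\in C_i\}$ of the sets $C_i$ of Theorem \ref{thm:noncyclic} under this correspondence. For example, $(k+1+(3k+2)i,\,i)\mapsto r^{k+1+(3k+2)i}s^{i}$, where the exponent of $s$ is read mod $2$ because $s^2=e$ and the exponent of $r$ is read mod $n$. The other two sets match in the same way, so this step is purely notational.

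Next I apply Theorem \ref{thm:noncyclic}. It says that $(C_0,C_1,C_2)$ is a $(2n,3,2k+1,1)$-CEDF in the direct product $\mathbb{Z}_n\times\mathbb{Z}_2$, and that each $C_i$ is symmetric there. A $1$-CEDF is a digraph-defined EDF for the directed $3$-cycle $H$ with edges $(0,1),(1,2),(2,0)$, since the defining union $\bigcup_i D(C_{i+1},C_i;G)$ is exactly $\bigcup_{(i,j)\in\E(H)}D(C_j,C_i;G)$. Corollary \ref{cor:stronglysymmetricEDF}, which rests on Corollary \ref{cor:symmDigDefEDF} and Theorem \ref{thm:sym}, then gives that $(C_0',C_1',C_2')$ is a $(2n,3,2k+1,1;H)$-EDF in $D_{2n}$. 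Equivalently, it is a $(3(2k+1)^2+1,3,2k+1,1)$-CEDF in $D_{3(2k+1)^2+1}$, and each set is in addition strongly symmetric.

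There is no real obstacle. The only points needing care are the bookkeeping ones: matching the dihedral notation $D_{3(2k+1)^2+1}$ (order $2n$) with Lemma \ref{lem:dihedraldef}, and confirming that the circular condition is the $3$-cycle digraph condition, so that Corollary \ref{cor:stronglysymmetricEDF} applies verbatim. Disjointness of the $C_i'$ follows from that of the $C_i$, because the correspondence is a bijection on the underlying set.
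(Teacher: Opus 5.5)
Your proposal is correct and matches the paper's argument: the paper obtains this theorem by applying Corollary~\ref{cor:stronglysymmetricEDF} (the dihedral form of Corollary~\ref{cor:symmDigDefEDF}) to the symmetric abelian CEDF of Theorem~\ref{thm:noncyclic}. Your extra bookkeeping makes explicit what the paper leaves implicit: the $1$-CEDF condition is the directed $3$-cycle digraph condition, $D_{3(2k+1)^2+1}$ is $D_{2n}$ with $n=6k^2+6k+2$, and the listed sets are exactly the images $C_i'$.
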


\section{Future work}
In this paper, we have presented a new framework to obtain difference structures and near-factorizations in groups where few or no examples were previously known, by using a semidirect product approach.  Inspired by previous work in the area, we have concentrated on the case when $G \rtimes \mathbb{Z}_2$ ($G$ abelian) but we would expect that a similar approach could be applied when $\mathbb{Z}_2$ is replaced by other suitable groups, and this avenue seems worthy of investigation.\\
This paper is the first to show how external difference families with more than two sets can be ``transferred" between groups, and to obtain constructions of this type; it would be of particular interest to see more results in this direction.  A current limitation is that there are very few constructions in the literature for external difference families in groups which are neither cyclic nor the additive groups of finite fields. It would therefore be worthwhile to develop constructions for new infinite families of EDFs in non-cyclic direct products and semidirect products, both for their intrinsic interest and as a first step in pursuing a transfer approach in this setting.

\section{Acknowledgements}
The third author acknowledges the support of The Edwards Summer Research Fellowship in Pure Mathematics at the University of St Andrews during summer 2025, during which part of this work was undertaken.  We thank Martyn Quick for helpful conversations, and Andrea Burgess, Francesca Merola and Tommaso Traetta for pointing out that the construction for the three-set CEDF in \cite{HucJefMcC} is applicable for a wider range of parameters than is stated in that paper.

\end{document}